\newcommand{\eqref}[1]{(\ref{#1})}
\newtheorem{thmm}{Theorem} 
\newtheorem{cor}{Corollary}    
\newtheorem{lemma}{Lemma}        
\def \one{\mathbh{1}}
\begin{document}
\begin{frontmatter}

\title{Euler approximations with varying coefficients: The~case of
superlinearly growing diffusion~coefficients}
\runtitle{Euler approximations with varying coefficients}

\begin{aug}
\author{\fnms{Sotirios}~\snm{Sabanis}\corref{}\ead
[label=e1]{s.sabanis@ed.ac.uk}}
\runauthor{S. Sabanis}
\affiliation{University of Edinburgh}
\address{School of Mathematics\\
University of Edinburgh\\
Edinburgh EH9 3JZ\\
United Kingdon\\
\printead{e1}}
%

\end{aug}

%
\received{\smonth{12} \syear{2014}}

%
\begin{abstract}

A new class of explicit Euler schemes, which approximate stochastic
differential equations (SDEs) with superlinearly growing drift and
diffusion coefficients, is proposed in this article. It is shown, under
very mild conditions, that these explicit schemes converge in
probability and in $\mathcal L^p$ to the solution of the corresponding
SDEs. Moreover, rate of convergence estimates are provided for
$\mathcal L^p$ and almost sure convergence. In particular, the strong
order $1/2$ is recovered in the case of uniform $\mathcal L^p$-convergence.
\end{abstract}

%
\begin{keyword}[class=AMS]
\kwd[Primary ]{60H35}
\kwd[; secondary ]{65C30}
\end{keyword}
\begin{keyword}
\kwd{Explicit Euler approximations}
\kwd{rate of convergence}
\kwd{local Lipschitz condition}
\kwd{superlinearly growing coefficients}
\end{keyword}
\end{frontmatter}

\section{Introduction} \label{Intro}

Motivated by the work of \cite{Sabanis} and \cite{HJK-converge} on
explicit Euler-type schemes which approximate (in an $\mathcal L^p$
sense) SDEs with superlinearly growing drift coefficients, the author
extends the techniques developed in \cite{Sabanis} and \cite
{Gyongy-Sabanis} to obtain, under very mild assumptions, convergence
results for the case of superlinearly growing diffusion coefficients.
For an extensive and up to date literature review on Euler
approximations, one can consult \cite{HJK-converge} and \cite{HJ},
where it is demonstrated that the implementation of implicit schemes
requires significantly more computational effort than this new
generation of explicit Euler-type approximations. Thus, the focus of
this work is solely on explicit methods. For implicit methods, one
could consult \cite{HMS,Mao-Szpruch} and the references therein.

In order to highlight the progress made in this article with comparison
to the latest developments in the field, namely \cite{HJ} and \cite
{Tretyakov-Zhang}, the following example is presented; consider a
nonlinear ($d$-dimensional) SDE which is given by
\[
dX(t)=\lambda X(t) \bigl(\mu-\bigl|X(t)\bigr|\bigr)\,dt + \xi\bigl|X(t)\bigr|^{3/2}\,dW_t
\]
with initial condition $X_0\in\mathbb R^d$, where $\lambda$, $\mu$ and
all elements of the vector $X_0$ are positive constants. Moreover, $\xi
\in \mathbb R^{d\times d_1}$ is a positive definite matrix and $\{
W(t)\}_{t \geq0}$ is a $d_1$-dimensional Wiener
martingale. This SDE is chosen since its one-dimensional version is the
popular $3/2$-model in Finance (see, e.g., \cite{Goard-Mazur} and the
references therein), which is used for modelling (nonaffine)
stochastic volatility processes and for pricing VIX options. One then
further observes that the coercivity and monotonicity conditions, which
are given in {A-4} and {A-6} below, are satisfied with
$p_0 =2p_1-1$ and $p_1 = \frac{\lambda}{|\xi|^2}+1$ (for more details,
see the \hyperref[app]{Appendix}). Due to Theorem~\ref{order-thmm} below, one obtains
convergence results in $\mathcal L^2$ (or more generally in $\mathcal
L^p$) with order $1/2$ even when $p_1$ and $p_0$ are relatively small.
Consider, for example, the case $p_1=3.5$ (and thus $p_0=6$); then the
explicit Euler-type scheme in Theorem~\ref{order-thmm} below converges
to the true solution of the above SDE in $\mathcal L^2$ with order $1/2$,
whereas the authors in \cite{HJ} are able to show $\mathcal
L^p$-convergence (without rate) of their explicit schemes only for
$p<1/2$ (see Section~4.10.3 in \cite{HJ}). Also, the findings in \cite
{Tretyakov-Zhang} (see Lemma~3.1 in \cite{Tretyakov-Zhang}) do not
produce the required moment bounds for the above case, and thus, no
statement can be made about the convergence of their explicit numerical
scheme in $\mathcal L^2$.

To further highlight the advantages of the proposed approximation
methods hereunder, it is noted that Theorem~\ref{mainthm} presents
optimal $\mathcal L^p$-convergence results of explicit Euler-type
schemes under the monotonicity condition {A-3} (see below) in
the sense that $\mathcal L^p$-convergence results are obtained for any
$p<p_0$ which essentially closes the gap appearing in \cite{HJ}.
Furthermore, Theorem~\ref{uniform-order-thmm} presents \textit{uniform}
$\mathcal L^p$-convergence results with order $1/2$. The author is not
aware of any other such results for the case of explicit Euler-type
approximations to SDEs with superlinearly growing diffusion coefficients.

This section concludes by introducing some basic notation. The
norm of a vector $x \in\mathbb R^d$ and the Hilbert--Schmidt norm of
a matrix $A\in\mathbb R^{d\times m}$ are respectively denoted by
$|x|$ and $|A|$. The transpose of a matrix $A \in\mathbb R^{d\times
m}$ is denoted by $A^{T}$ and the scalar product of two vectors $x,
y \in\mathbb R^d$ is denoted by $xy$. The integer part of a
nonnegative real
number $x$ is denoted by $\lfloor x\rfloor$. Moreover, $\mathcal
L^p=\mathcal
L^p(\Omega, \mathcal F, \mathbb P)$ denotes the space of random
variables $X$ with a norm $\|X\|_p:= (\mathbb
E [|X|^p ] )^{1/p} < \infty$ for $p>0$. Finally, $\mathcal
B(V)$ denotes the $\sigma$-algebra of Borel sets of a topological
space~$V$.

\section{Main results}

Let $(\Omega, \{\mathcal F_t\}_{t \ge0}, \mathcal F, \mathbb
P)$ be a filtered probability space satisfying the usual
conditions, that is, the filtration is increasing, right continuous and
complete.
Let $\{W(t)\}_{t \geq0}$ be a $d_1$-dimensional Wiener
martingale. Furthermore, it is assumed that $b(t,x)$ and
$\sigma(t,x)$ are $\mathcal B(\mathbb R_+) \otimes
\mathcal{B}(\mathbb R^d)$-measurable functions which take values in
$\mathbb R^d$ and $\mathbb R^{d \times d_1}$, respectively. For a fixed
$T>0$, let us consider an SDE given by
%
\begin{equation}
\label{sde} dX(t)=b\bigl(t, X(t)\bigr)\,dt+\sigma\bigl(t,X(t)\bigr)\,dW(t)
\qquad\forall
t\in [0, T],
\end{equation}
with initial value $X(0)$ which is an almost surely finite $\mathcal
F_0$-measurable random variable.

Let constants $p_0$ and $p_1 \in[2, \infty)$. We consider the
following conditions:
\begin{longlist}[A-5.]

\item[A-1.] The function $b(t, x)$ is continuous in $x$ for
any $t\in[0, T]$.

\item[A-2.] For every $R \ge0$, there exists a constant $N_R$
such that
\[
\sup_{|x|\le R}\bigl|b(t,x)\bigr|\le N_R
\]
for any $t \in[0,T]$.

\item[A-3.] For every $R>0$, there exists a positive constant
$L_R$ such that, for any $t \in[0,T]$,
\[
2(x-y) \bigl(b(t,x)-b(t,y)\bigr) + (p_1-1)\bigl|\sigma(t,x)-
\sigma(t,y)\bigr|^2 \leq L_R |x-y|^2
\]
for all $|x|, |y| \leq R$.

\item[A-4.] There exists a positive constant $K$ such that
\[
2xb(t,x) + (p_0-1)\bigl|\sigma(t,x)\bigr|^2 \leq K
\bigl(1+|x|^2\bigr)
\]
for any $t \in[0,T]$ and $x \in\mathbb R^d$.

\item[A-5.] $\mathbb{E}[|X(0)|^{p_0}]<\infty$.
\end{longlist}

\begin{remark} \label{local_bdd_sigma}
Due to {A-2} and {A-4}, for every $R\ge0$, there exists a
constant $N^{\prime}_R$ such that
$
\sup_{|x|\le R}|\sigma(t,x)|\le N^{\prime}_R
$
for any $t \in[0,T]$.
\end{remark}

Furthermore, for every $n \geq1$, the
following numerical scheme is defined:
%
\begin{eqnarray}
\label{tem} dX_n(t)=b_n\bigl(t,X_n
\bigl(\kappa_n(t)\bigr)\bigr)\,dt+\sigma_n
\bigl(t,X_n\bigl(\kappa_n(t)\bigr)\bigr)\,dW(t)
\nonumber
\\[-8pt]
\\[-8pt]
\eqntext{\forall
t\in [0, T],}
\end{eqnarray}
with the same initial value $X(0)$ as equation \eqref{sde}, where
$b_n(t,x)$ and $\sigma_n(t,x)$ are
$\mathcal B(\mathbb R_+) \otimes\mathcal B(\mathbb R^d)$-measurable
functions which take values in $\mathbb R^d$ and $\mathbb R^{d \times
d_1}$, respectively, and $\kappa_n(t):=\lfloor nt\rfloor/n$. The
following conditions are considered:
\begin{longlist}[{B-1.}]
\item[B-1.] For every $R \ge0$,
%
\begin{eqnarray}
\label{difference} \int_0^T\sup
_{|x|\le R}\bigl[\bigl|b_n(t,x)- b(t,x)\bigr|^{p_0}+ \bigl|
\sigma_n(t,x)-\sigma (t,x)\bigr|^{p_0}\bigr]\,dt \rightarrow 0
\nonumber
\\[-8pt]
\\[-8pt]
\eqntext{\mbox{as } n\to\infty.}
\end{eqnarray}

\item[B-2.] There exist an $\alpha\in(0, 1/2]$ and a
constant $C$ such that, for every $n\ge1$,
%
\begin{eqnarray}
\label{uni_bound} \bigl|b_n(t,x)\bigr|&\le&\min\bigl(Cn^{\alpha}\bigl(1+|x|\bigr),
\bigl|b(t,x)\bigr|\bigr) \quad\mbox{and}
\nonumber
\\[-8pt]
\\[-8pt]
\nonumber
\bigl |\sigma_n(t,x)\bigr|^2 &\le&\min
\bigl(Cn^{\alpha}\bigl(1+|x|^2\bigr), \bigl|\sigma(t,x)\bigr|^2
\bigr),
\end{eqnarray}
for any $t \in[0,T]$ and $x \in\mathbb R^d$.

\item[B-3.] There exists a positive constant $K$ such that,
for every $n\ge1$,
%
\begin{equation}
\label{n_coercivity} 2xb_n(t,x) + (p_0-1)\bigl|
\sigma_n(t,x)\bigr|^2 \leq K\bigl(1+|x|^2\bigr)
\end{equation}
for any $t \in[0,T]$ and $x \in\mathbb R^d$.
\end{longlist}

\begin{remark} Note that the set of sequences of functions which
satisfy {B-1}--{B-3} is nonempty. In order to see this,
one considers the following.

\textit{Model} 1:
%
\begin{equation}
\label{b_n} b_n(t,x):=\frac{1}{1+n^{-\alpha}|b(t,x)| + n^{-\alpha}|\sigma(t,x)|^2}b(t,x)
\end{equation}
and
%
\begin{equation}
\label{sigma_n} \sigma_n(t,x):= \frac{1}{1+n^{-\alpha}|b(t,x)| + n^{-\alpha}|\sigma(t,x)|^2}\sigma(t,x),
\end{equation}
for any $t \in[0,T]$, $x \in\mathbb R^d$ and $n\ge1$. One observes
immediately that {B-2} is satisfied,
and furthermore that, due to {A-4}, {B-3} is also
satisfied. One also observes that, for every $R\ge0$,
\begin{eqnarray*}
&&\int_0^T\sup_{|x|\le R}\bigl|b_n(t,x)-
b(t,x)\bigr|^{p_0}\,dt
\nonumber
\\[-8pt]
\\[-8pt]
\nonumber
&&\qquad\le n^{-\alpha
p_0}\int_0^T
\sup_{|x|\le R}\frac{2^{p_0-1}(|b(t,x)|^{p_0} + |\sigma
(t,x)|^{2p_0})}{(1+n^{-\alpha}|b(t,x)| + n^{-\alpha}|\sigma
(t,x)|^2)^{p_0}}\bigl|b(t,x)\bigr|^{p_0} \,dt,
\end{eqnarray*}
which tends to 0 as $n\to\infty$, due to {A-2}. Similarly, one
obtains the same result for the diffusion coefficients so as to show
that {B-1} holds.

Finally, for every $n \geq1$, one deduces immediately that $b_n(t,x)$
and $\sigma_n(t,x)$ are
$\mathcal B(\mathbb R_+) \otimes\mathcal B(\mathbb R^d)$-measurable
functions which take values in $\mathbb R^d$ and $\mathbb R^{d
\times d_1}$, respectively.
\end{remark}

\begin{remark} \label{moment_N_bound}
Note that due to {B-2}, for each $n \ge1$, the norm of
$b_n$ and of $\sigma_n$ have at most linear growth in $x$ and that
guarantees the existence of a unique solution to \eqref{tem}.
Moreover, it guarantees along with {A-5} that for each $n\ge1$,
%
\begin{equation}
\label{n_bound} \sup_{0\le t\le T}\mathbb{E}\bigl[\bigl|X_n(t)\bigr|^p
\bigr] < \infty
\end{equation}
for any $p\le p_0$. Clearly, one cannot claim at this point that any of
these bounds is independent of $n$.
\end{remark}

The main results of this paper follow.

\begin{thmm}\label{mainthm}
Suppose \textup{{A-1}--{A-5}} and \textup{{B-1}--{B-3}} hold
with $\alpha\in(0,  1/2]$, then the numerical
scheme \eqref{tem} converges to the true solution of SDE \eqref{sde}
in $\mathcal L^p$-sense, that is,
\begin{eqnarray}
\lim_{n \rightarrow\infty}\sup_{0 \leq t \leq T}\mathbb E \bigl[
\bigl|X(t)-X_n(t)\bigr|^p \bigr]=0
\nonumber
\end{eqnarray}
for all $p <p_0$.
\end{thmm}

If one then moves from local to global monotonicity conditions and
considers coefficients which have at most polynomial growth, one
typically assumes the following:
%

A-6. There exist positive constants $l$ and $L$ such that,
for any $t \in[0,T]$,
\[
2(x-y) \bigl(b(t,x)-b(t,y)\bigr) + (p_1-1)\bigl|\sigma(t,x)-
\sigma(t,y)\bigr|^2 \leq L |x-y|^2
\]
and
\[
\bigl|b(t,x)-b(t,y)\bigr| \leq L\bigl(1+ |x|^l + |y|^l\bigr) |x-y|
\]
for all $x,  y \in\mathbb R^d$.

\begin{remark} \label{poly_growth_remark}
One observes that if {A-2}, {A-4} and {A-6} hold, then
%
\begin{eqnarray}
\label{poly_growth}
\bigl |b(t,x)\bigr| &\le&\bigl|b(t,x) - b(t, 0)\bigr| +
\bigl |b(t,0)\bigr| \le L\bigl(1 +
|x|^l\bigr)|x| + N_0
\nonumber
\\[-8pt]
\\[-8pt]
\nonumber
&\le& N\bigl(1 + |x|^{l+1}
\bigr)
\end{eqnarray}
for any $t\in[0, T]$ and $x\in\mathbb{R}^d$, where $N$ is a positive
constant. Similarly, one calculates
%
\begin{equation}
\label{diffusion_poly_growth} \bigl|\sigma(t,x)\bigr|^2 \le K \bigl(1+|x|^2
\bigr)+ 2N\bigl(1 + |x|^{l+1}\bigr)|x| \le C\bigl(1 + |x|^{l+2}
\bigr).
\end{equation}
\end{remark}

\begin{remark} Note that {A-6} and Remark~\ref
{poly_growth_remark} allow us to specify another model which produces
the optimal rate of convergence and satisfies {B-1}--{B-3}. Consider
the following.

\textit{Model} 2:
%
\begin{equation}
\label{b2_n} b_n(t,x):=\frac{1}{1+ n^{-\alpha}|x|^l}b(t,x)
\end{equation}
and
%
\begin{equation}
\label{sigma2_n} \sigma_n(t,x):= \frac{1}{1 + n^{-\alpha}|x|^l}\sigma(t,x),
\end{equation}
for any $t \in[0,T]$, $x \in\mathbb R^d$ and $n\ge1$. One then
observes that {B-2} is satisfied due to \eqref{poly_growth} and
\eqref{diffusion_poly_growth},
and furthermore that, due to {A-4}, {B-3} is also
satisfied. One also observes that, for every $R\ge0$,
\begin{eqnarray*}
\int_0^T \sup_{|x|\le R}\bigl|b_n(t,x)-
b(t,x)\bigr|^{p_0}\,dt &\le& n^{-\alpha
p_0}\int_0^T
\sup_{|x|\le R}\frac{|x|^{lp_0}}{(1 + n^{-\alpha
}|x|^l)^{p_0}}\bigl|b(t,x)\bigr|^{p_0} \,dt\\
& \to&0,
\end{eqnarray*}
as $n\to\infty$, due to \eqref{poly_growth}. Similarly, one obtains
the same result for the diffusion coefficients so as to show that
{B-1} holds.
\end{remark}

$\mathfrak{p}$-\textit{condition.} The coefficients $b_n$ and
$\sigma_n$ are given by equations \eqref{b2_n} and \eqref{sigma2_n}
with $\alpha=1/2$, $l\le\frac{p_0-2}{4}$ and there exists a positive
$p$ such that $p < p_1$ and $p\le\frac{p_0}{2l+1}$.

One then can recover the optimal rate of (strong) convergence for Euler
approximations.

\begin{thmm}\label{order-thmm}
Suppose \textup{{A-2}} and \textup{{A-4}--{A-6}} and the $\mathfrak
{p}$-\textit{condition} hold, then the
numerical scheme \eqref{tem} converges to the true solution of SDE
\eqref{sde} in $\mathcal L^p$-sense with order $1/2$, that is,
%
\begin{equation}
\label{rate_one_half} \sup_{0 \le t \le T}\mathbb E \bigl[
\bigl|X(t)-X_n(t)\bigr|^p \bigr] \le C n^{-p/2},
\end{equation}
where $C$ is a constant independent of $n$.
\end{thmm}

\begin{remark}
Observe that when $l=0$, that is, the drift and diffusion coefficients
are allowed to grow at most linearly and satisfy a global Lipschitz
condition, Theorem~\ref{order-thmm} produces the optimal result known in
classical literature, and thus it can be seen as a generalisation of
the classical approach since the restrictions in the
$\mathfrak{p}$-\textit{condition} are reduced to only one, namely $p\le p_0$.
\end{remark}

For somewhat smaller values of $p$, one obtains similar results in the
case of uniform $\mathcal L^p$ convergence.

\begin{thmm}\label{uniform-order-thmm}
Suppose \textup{{A-2}}, \textup{{A-4}--{A-6}} and the
$\mathfrak{p}$-\textit{condition} hold, then the
numerical scheme \eqref{tem} converges to the true solution of SDE
\eqref{sde} in \textit{uniform} $\mathcal L^q$-sense with order $1/2$,
that is,
%
\begin{equation}
\label{uniform_rate_one_half} \mathbb E \Bigl[\sup_{0 \le t \le T}
\bigl|X(t)-X_n(t)\bigr|^q \Bigr] \le C n^{-q/2},
\end{equation}
where $C$ is a constant independent of $n$, for all $q < p$.
\end{thmm}

\section{Convergence in probability and moment bounds}

One first notes the following result which along with the relevant
moment bounds of the numerical scheme \eqref{tem} suffice for the proof
of Theorem~\ref{mainthm}.

\begin{thmm} \label{cp_main}
Suppose conditions \textup{{A-1}--{A-4}} and \textup{{B-1}} hold.
Then the numerical scheme \eqref{tem} converges to the true solution of
SDE \eqref{sde} in probability, that is,
\[
\sup_{0 \le t\le T}\bigl|X_n(t)-X(t)\bigr| \mathop{\rightarrow}^{\mathbb{P}}
0 \qquad\mbox{as } n\to\infty.
\]
\end{thmm}
\begin{pf}
This is a direct consequence of Theorem~4.1 in \cite{Gyongy-Sabanis}.
\end{pf}

The $\mathcal L^2$ estimate is presented first as it demonstrates the
stability of the proposed numerical schemes. 

\begin{lemma}
\label{second_moment_bound} Consider the numerical scheme \eqref{tem}
and let \textup{{A-5}}, \textup{{B-2}} and \textup{{B-3}}
hold, then for some $C:=C(T, K,   \mathbb{E}[|X(0)|^2])$,
%
\begin{equation}
\label{2nd_moment_bound} \sup_{n\ge1}\sup_{0\le u\le T}
\mathbb{E}\bigl|X_n(u)\bigr|^2 <C.
\end{equation}
\end{lemma}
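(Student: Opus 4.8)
The plan is to apply Itô's formula to $|X_n(t)|^2$, take expectations, and use the coercivity-type condition \textbf{B-3} together with a Gronwall argument, being careful that the drift and diffusion are evaluated at the delayed point $X_n(\kappa_n(t))$ rather than at $X_n(t)$. First I would write, for fixed $t\in[0,T]$,
\[
\mathbb{E}|X_n(t)|^2 = \mathbb{E}|X(0)|^2 + \mathbb{E}\int_0^t \Big(2X_n(s)\,b_n(s,X_n(\kappa_n(s))) + |\sigma_n(s,X_n(\kappa_n(s)))|^2\Big)\,ds,
\]
the stochastic integral having zero expectation because, by \textbf{B-2}, $\sigma_n$ has at most linear growth and $X_n$ has finite moments of all orders up to $p_0$ for each fixed $n$ (Remark \ref{moment_N_bound}). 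The obstacle is that the integrand involves the mismatched pair $\big(X_n(s),\,X_n(\kappa_n(s))\big)$, so \textbf{B-3} does not apply directly; I would resolve this by writing $X_n(s) = X_n(\kappa_n(s)) + \Delta_n(s)$, where $\Delta_n(s) := \int_{\kappa_n(s)}^s b_n\,dr + \int_{\kappa_n(s)}^s \sigma_n\,dW_r$, and splitting $2X_n(s)b_n(s,X_n(\kappa_n(s))) = 2X_n(\kappa_n(s))b_n(s,X_n(\kappa_n(s))) + 2\Delta_n(s)b_n(s,X_n(\kappa_n(s)))$.

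For the first piece I would add and subtract $(p_0-1)|\sigma_n|^2 \ge 0$ to invoke \textbf{B-3}, obtaining $2X_n(\kappa_n(s))b_n(s,X_n(\kappa_n(s))) + |\sigma_n(s,X_n(\kappa_n(s)))|^2 \le K(1+|X_n(\kappa_n(s))|^2)$. For the cross term $2\Delta_n(s)b_n(s,X_n(\kappa_n(s)))$ I would use Young's inequality and then control $\mathbb{E}|\Delta_n(s)|^2$ using \textbf{B-2}: since $|b_n| \le Cn^\alpha(1+|X_n(\kappa_n(s))|)$ and $|\sigma_n|^2 \le Cn^\alpha(1+|X_n(\kappa_n(s))|^2)$, and the time increment $s-\kappa_n(s) \le 1/n$, the Itô isometry and Cauchy–Schwarz give $\mathbb{E}|\Delta_n(s)|^2 \le C(n^{2\alpha} n^{-2} + n^\alpha n^{-1})(1 + \mathbb{E}|X_n(\kappa_n(s))|^2) \le C n^{\alpha-1}(1 + \sup_{0\le r\le s}\mathbb{E}|X_n(r)|^2)$, using $\alpha \le 1/2$ so that $n^{2\alpha-2} \le n^{\alpha-1}$. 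Crucially, the prefactor $n^\alpha$ coming from the bound on $b_n$ in the cross term is absorbed by a compensating $n^{-\alpha}$ (or better) from $\mathbb{E}|\Delta_n(s)|$ via Young's inequality balanced appropriately — more precisely, $2\mathbb{E}|\Delta_n(s) b_n| \le n^{-\alpha}\mathbb{E}|b_n|^2 + n^\alpha\mathbb{E}|\Delta_n(s)|^2 \le Cn^\alpha(1+\mathbb{E}|X_n(\kappa_n(s))|^2) + \ldots$, hmm — so actually the clean route is to bound $2|\Delta_n(s)b_n| \le |b_n| + |b_n||\Delta_n(s)|^2$ and use that $|b_n| \le Cn^\alpha(1+|X_n(\kappa_n(s))|)$ while the $n^\alpha$ is killed by an $n^{-1}$ from the time-step after taking expectations — the key book-keeping being that each factor of $n^\alpha$ is paired with at least one factor of $n^{-1}$ from a $\,ds$ or $dW$ increment on $[\kappa_n(s),s]$, and $\alpha \le 1/2 \le 1$.

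Collecting terms, I would arrive at the integral inequality $\sup_{0\le u\le t}\mathbb{E}|X_n(u)|^2 \le \mathbb{E}|X(0)|^2 + C\int_0^t\big(1 + \sup_{0\le r\le s}\mathbb{E}|X_n(r)|^2\big)\,ds$ with $C$ depending only on $T$, $K$, and not on $n$ (this is where $\alpha \le 1/2$ is used to make the $n$-dependent contributions uniformly bounded). Since $\sup_{0\le u\le T}\mathbb{E}|X_n(u)|^2 < \infty$ for each fixed $n$ by Remark \ref{moment_N_bound}, Gronwall's lemma yields $\sup_{0\le u\le T}\mathbb{E}|X_n(u)|^2 \le C(1 + \mathbb{E}|X(0)|^2)e^{CT} =: C(T,K,\mathbb{E}|X(0)|^2)$, uniformly in $n$, which is the claim. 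The main obstacle, as noted, is the delay: one must carefully track that every stray power of $n^\alpha$ generated by the linear-growth bound \textbf{B-2} is compensated by a power of $n^{-1}$ from the length of the interval $[\kappa_n(s), s]$, and that the constant emerging from Gronwall is genuinely $n$-free.
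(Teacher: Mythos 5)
Your overall skeleton (It\^o's formula, decomposing $X_n(s)=X_n(\kappa_n(s))+\Delta_n(s)$, invoking \textbf{B-3} on the matched pair via $p_0\ge 2$, then Gronwall) is exactly the paper's, and your estimate $\mathbb{E}|\Delta_n(s)|^2\le Cn^{\alpha-1}\bigl(1+\sup_{r\le s}\mathbb{E}|X_n(r)|^2\bigr)$ is correct. The gap is in the cross term $2\,\mathbb{E}[\Delta_n(s)\,b_n(s,X_n(\kappa_n(s)))]$, and neither of the two closings you propose works. The weighted Young inequality $2|\Delta_n||b_n|\le \epsilon_n|b_n|^2+\epsilon_n^{-1}|\Delta_n|^2$ needs $\epsilon_n n^{2\alpha}\le C$ and $\epsilon_n^{-1}n^{\alpha-1}\le C$ simultaneously, which forces $\alpha\le 1/3$; for $\alpha\in(1/3,1/2]$ (a range the lemma must cover, and the one used for the rate results) every choice of $\epsilon_n$ leaves an unbounded power of $n$ --- you noticed this yourself. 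The same obstruction hits a direct Cauchy--Schwarz, which yields $n^{3\alpha/2-1/2}$. Your fallback $2|\Delta_n b_n|\le |b_n|+|b_n||\Delta_n|^2$ is worse: the standalone term $\int_0^t\mathbb{E}|b_n(s,X_n(\kappa_n(s)))|\,ds$ carries a bare factor $n^{\alpha}$ with no increment of length $1/n$ to pair it with (that $ds$ runs over $[0,t]$, not over $[\kappa_n(s),s]$), and the term $\mathbb{E}[|b_n||\Delta_n|^2]$ involves third moments of $X_n$, which an $\mathcal L^2$ lemma cannot use. So the slogan ``every $n^\alpha$ is paired with an $n^{-1}$'' is not realized by the bounds you actually wrote down.

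The missing idea is that the cross term must be split according to the two parts of $\Delta_n(s)$ \emph{before} any absolute values are taken. Since $b_n(s,X_n(\kappa_n(s)))$ is $\mathcal F_{\kappa_n(s)}$-measurable and $\int_{\kappa_n(s)}^{s}\sigma_n(u,X_n(\kappa_n(u)))\,dW(u)$ is a square-integrable martingale increment over $[\kappa_n(s),s]$ (integrability from \eqref{n_bound}), conditioning on $\mathcal F_{\kappa_n(s)}$ shows that the martingale part of the cross term has expectation exactly zero --- no estimate is needed for it. Only the drift part of $\Delta_n(s)$ survives, and there the product of the two drift factors gives $Cn^{2\alpha}\cdot n^{-1}(1+|X_n(\kappa_n(\cdot))|)^2\le Cn^{2\alpha-1}(1+|X_n(\kappa_n(\cdot))|^2)$, which is bounded uniformly in $n$ precisely because $\alpha\le 1/2$. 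This is the paper's computation in \eqref{local_second_moment}; without the vanishing of the martingale contribution, the cross term cannot be closed at the $\mathcal L^2$ level for all $\alpha\in(0,1/2]$.
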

\begin{pf}
The application of It\^{o}'s formula yields
%
\begin{eqnarray}
\label{Ito_2} \bigl|X_n(t)\bigr|^2 &= &\bigl |X(0)\bigr|^2 + 2
\int_0^t X_n(s)
b_n\bigl(s, X_n\bigl(\kappa_n(s)\bigr)
\bigr) \,ds + \int_0^t\bigl| \sigma_n
\bigl(s, X_n\bigl(\kappa_n(s)\bigr)
\bigr)\bigr|^2 \,ds
\nonumber
\\
&&{} + 2 \int_0^t X_n(s)
\sigma_n\bigl(s, X_n\bigl(\kappa_n(s)
\bigr)\bigr) \,dW(s)
\nonumber
\\
&= & \bigl|X(0)\bigr|^2 + 2\int_0^t
\bigl[X_n\bigl(\kappa_n(s)\bigr)b_n
\bigl(s, X_n\bigl(\kappa_n(s)\bigr)\bigr)\\
&&{} +\bigl
\{X_n(s) - X_n\bigl(\kappa_n(s)\bigr)
\bigr\}b_n\bigl(s, X_n\bigl(\kappa _n(s)
\bigr)\bigr)\bigr] \,ds
\nonumber
\\
&&{} + \int_0^t \bigl| \sigma_n\bigl(s,
X_n\bigl(\kappa_n(s)\bigr)\bigr)\bigr|^2\,ds + 2
\int_0^t X_n(s)
\sigma_n\bigl(s, X_n\bigl(\kappa_n(s)
\bigr)\bigr) \,dW(s).\nonumber
\end{eqnarray}
Moreover, one calculates
%
\begin{eqnarray}
\label{local_second_moment}
&&\mathbb{E}\int_0^t \bigl
\{X_n(s) - X_n\bigl(\kappa_n(s)\bigr)
\bigr\}b_n\bigl(s, X_n\bigl(\kappa_n(s)
\bigr)\bigr) \,ds
\nonumber
\\
&&\qquad= \mathbb{E}\int_0^T \int
_{\kappa_n(s)}^{s}b_n\bigl(u,
X_n\bigl(\kappa_n(u)\bigr)\bigr)\,du b_n
\bigl(s, X_n\bigl(\kappa _n(s)\bigr)\bigr) \,ds
\nonumber
\\
&&\qquad\quad{}+ \mathbb{E}\int_0^t \int
_{\kappa_n(s)}^{s}\sigma_n\bigl(u,
X_n\bigl(\kappa _n(u)\bigr)\bigr)\,dW(u)
b_n\bigl(s, X_n\bigl(\kappa_n(u)\bigr)
\bigr) \,ds
\nonumber
\\
&&\qquad\le \mathbb{E}\int_0^T \int
_{\kappa_n(s)}^{s}\bigl|b_n\bigl(u,
X_n\bigl(\kappa_n(u)\bigr)\bigr)\bigr|\,du
\bigl|b_n\bigl(s, X_n\bigl(\kappa_n(s)\bigr)
\bigr)\bigr| \,ds
\nonumber
\\[-8pt]
\\[-8pt]
\nonumber
&&\qquad\quad{}+ \mathbb{E} \sum_{k=0}^{n(\lfloor t\rfloor+1)}\int
_{{k}/{n}}^{({(k+1)}/{n}) \wedge
t}\int_{{k}/{n}}^{s}
\sigma_n\bigl(u, X_n(k/n)\bigr)\,dW(u) b_n
\bigl(s, X_n(k/n)\bigr) \,ds
\nonumber
\\
&&\qquad\le Cn^{2\alpha}\mathbb{E}\int_0^t \int
_{\kappa_n(s)}^{s}\bigl(1+\bigl| X_n\bigl(
\kappa_n(u)\bigr)\bigr|\bigr)\,du \bigl(1+\bigl|X_n\bigl(
\kappa_n(s)\bigr)\bigr|\bigr) \,ds \nonumber\\
\eqntext{\mbox{(due to {B-2})}}
\\
&&\qquad\le C n^{2\alpha-1} \biggl(1 + \mathbb{E}\int_0^t
\bigl|X_n\bigl(\kappa_n(s)\bigr)\bigr|^2 \,ds
\biggr),\nonumber
\end{eqnarray}
where $C$ is a positive general constant independent of $n$.
Thus, due to \eqref{Ito_2}, {B-3}, \eqref{n_bound} and \eqref
{local_second_moment}, for any $t\in
[0, T]$,
\begin{eqnarray*}
\mathbb{E}\bigl|X_n(t)\bigr|^2 &\le& C \biggl(1 +
\mathbb{E}\bigl|X(0)\bigr|^2 + \mathbb{E}\int_0^t
\bigl|X_n\bigl(\kappa_n(s)\bigr)\bigr|^2 \,ds\biggr)
\\
&\le& C \biggl(1 + \mathbb{E}\bigl|X(0)\bigr|^2 + \int_0^t
\sup_{0\le u\le
s}\mathbb{E}\bigl|X_n(u)\bigr|^2 \,ds
\biggr),
\end{eqnarray*}
which implies
\[
\sup_{0\le u\le t}\mathbb{E}\bigl|X_n(u)\bigr|^2
\le C \biggl(1 + \mathbb{E}\bigl|X(0)\bigr|^2 + \int_0^t
\sup_{0\le u\le
s}\mathbb{E}\bigl|X_n(u)\bigr|^2 \,ds
\biggr) <\infty,
\]
where the positive general constant $C$ is independent of $n$. One then
observes that the application of Gronwall's lemma
yields the desired result.
\end{pf}

\begin{lemma} \label{optimal_moment_bound}
Suppose that \textup{{A-1}--{A-5}}, \textup{{B-2}} and \textup{{B-3}}
hold, then for every $p \le p_0$
%
\begin{equation}
\label{optimal_bound} \sup_{0\leq t \leq
T}\mathbb E \bigl|X(t)\bigr|^p \vee
\sup_{n \ge1}\sup_{0\leq t
\leq T}\mathbb E
\bigl|X_n(t)\bigr|^p <C,
\end{equation}
where the constant $C:=C(p, T, K,   \mathbb{E}[|X(0)|^p])$.
\end{lemma}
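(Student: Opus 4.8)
First I would observe that it suffices to establish the bound for $p\in[2,p_0]$, since for $p\in(0,2)$ it then follows from the case $p=2$ (Lemma \ref{second_moment_bound}) and Jensen's inequality; so fix $p\in[2,p_0]$. In both parts of the claim I would apply It\^{o}'s formula to $V(x):=(1+|x|^2)^{p/2}$ rather than to $|x|^p$ (the former is $C^\infty$, which avoids having to regularise at the origin), and work with the stopped processes — stopping at $\tau_R:=\inf\{t:|X(t)|\ge R\}$ for the true solution and at the analogous hitting time of level $R$ for $X_n$ — so that the It\^{o} stochastic integrals are genuine martingales; one then lets $R\to\infty$ by Fatou's lemma, all constants being independent of $R$ and, for the scheme, of $n$ (the finiteness of the stopped $p$-th moments for each fixed $n$ is guaranteed by Remark \ref{moment_N_bound}). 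For the true solution this is the classical argument: using $|x^T\sigma|^2\le|x|^2|\sigma|^2\le(1+|x|^2)|\sigma|^2$ to dominate the second-order term, the drift of $V(X(t))$ is at most $p(1+|X(s)|^2)^{p/2-1}\big(X(s)b(s,X(s))+\tfrac{p-1}{2}|\sigma(s,X(s))|^2\big)$, which by \textbf{A-4} (applicable since $p\le p_0$) is at most $\tfrac{pK}{2}(1+|X(s)|^2)^{p/2-1}(1+|X(s)|^2)=\tfrac{pK}{2}V(X(s))$; Gronwall's lemma, \textbf{A-5} and Fatou then give $\sup_{0\le t\le T}\mathbb E|X(t)|^p<\infty$.

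For the scheme I would run the same computation for $V(X_n(t))$, so that after the identical treatment of the second-order term the drift is controlled by $p(1+|X_n(s)|^2)^{p/2-1}\big(X_n(s)b_n(s,X_n(\kappa_n(s)))+\tfrac{p-1}{2}|\sigma_n(s,X_n(\kappa_n(s)))|^2\big)$, and then write $X_n(s)=X_n(\kappa_n(s))+(X_n(s)-X_n(\kappa_n(s)))$ inside the first summand. The ``on-grid'' part, $p(1+|X_n(s)|^2)^{p/2-1}\big(X_n(\kappa_n(s))b_n(s,X_n(\kappa_n(s)))+\tfrac{p-1}{2}|\sigma_n(s,X_n(\kappa_n(s)))|^2\big)$, is at most $\tfrac{pK}{2}(1+|X_n(s)|^2)^{p/2-1}(1+|X_n(\kappa_n(s))|^2)$ by \textbf{B-3} (using $p\le p_0$), which after Young's inequality is harmless, being $\le C\,V(X_n(s))+C\,V(X_n(\kappa_n(s)))$. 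The ``off-grid'' remainder, $p\,\mathbb E\big[(1+|X_n(s)|^2)^{p/2-1}(X_n(s)-X_n(\kappa_n(s)))b_n(s,X_n(\kappa_n(s)))\big]$, is the crux. I would substitute $X_n(s)-X_n(\kappa_n(s))=\int_{\kappa_n(s)}^{s}b_n(u,X_n(\kappa_n(s)))\,du+\int_{\kappa_n(s)}^{s}\sigma_n(u,X_n(\kappa_n(s)))\,dW(u)$ (legitimate since $\kappa_n$ is constant on a mesh interval). Against the $du$-part, \textbf{B-2} and $s-\kappa_n(s)\le1/n$ give a bound of order $n^{2\alpha-1}$ times $V$-quantities, harmless since $\alpha\le1/2$. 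Against the $dW$-part one first replaces the prefactor $(1+|X_n(s)|^2)^{p/2-1}$ by its frozen value $(1+|X_n(\kappa_n(s))|^2)^{p/2-1}$: the resulting term pairs an $\mathcal F_{\kappa_n(s)}$-measurable factor with the martingale increment $\int_{\kappa_n(s)}^{s}\sigma_n\,dW$, hence vanishes after taking expectations, and one is left with the increment of the prefactor, $(1+|X_n(s)|^2)^{p/2-1}-(1+|X_n(\kappa_n(s))|^2)^{p/2-1}$, times $\int_{\kappa_n(s)}^{s}\sigma_n\,dW$ times $b_n(s,X_n(\kappa_n(s)))$; this last term I would bound using a derivative estimate for $x\mapsto(1+|x|^2)^{p/2-1}$, the one-step moment bound $\mathbb E[|X_n(s)-X_n(\kappa_n(s))|^q\mid\mathcal F_{\kappa_n(s)}]\le Cn^{-\theta q}(1+|X_n(\kappa_n(s))|^q)$ for a $\theta>0$ (from \textbf{B-2} and Burkholder--Davis--Gundy), \textbf{B-2}, and Young's inequality, absorbing it into a constant multiple of $1+\mathbb E V(X_n(\kappa_n(s)))$; here $\alpha\le1/2$ is precisely what keeps every power of $n$ nonpositive.

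Collecting the estimates and writing $g_n(t):=\sup_{0\le u\le t}\mathbb E V(X_n(u))$ — finite for each fixed $n$ by Remark \ref{moment_N_bound}, so that Gronwall's lemma applies — one obtains $g_n(t)\le \mathbb E V(X(0))+C\int_0^t(1+g_n(s))\,ds$ with $C$ independent of $n$, hence $\sup_{n\ge1}\sup_{0\le t\le T}\mathbb E V(X_n(t))<\infty$, which yields the asserted bound on $\mathbb E|X_n(t)|^p$. I expect the off-grid $dW$-remainder to be the real obstacle: it is the only place where a current-time factor genuinely interacts with a Wiener increment, and the argument closes only because the $\mathcal F_{\kappa_n(s)}$-measurable leading part of that factor can be split off and annihilated by the martingale property, leaving a correction that carries an extra (small) power of the one-step increment — which, together with $\alpha\le1/2$, is exactly what Young's inequality needs in order to absorb it into $g_n$.
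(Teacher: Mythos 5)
Your proposal is correct and follows essentially the same route as the paper: It\^{o}'s formula on a $p$-th power functional, the on-grid part absorbed via \textbf{B-3}, and the off-grid term split into a frozen-prefactor piece (whose $dW$-contribution vanishes by $\mathcal F_{\kappa_n(s)}$-measurability) plus a prefactor-increment remainder, exactly the paper's $E_1+E_2$ decomposition. The only differences are technical conveniences on your side: working with $(1+|x|^2)^{p/2}$ and a derivative estimate plus conditional one-step moment bounds for the remainder, where the paper expands $|X_n(s)|^{p_0-2}-|X_n(\kappa_n(s))|^{p_0-2}$ by It\^{o}'s formula into the terms $E_{21}$--$E_{26}$ and has to defer the case $2<p_0<4$ to Remark \ref{split}, an issue your smooth choice of $V$ avoids altogether.
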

\begin{pf}
It is well known from the classical literature that the result
\[
\sup_{0\leq t \leq T}\mathbb E \bigl|X(t)\bigr|^p < C
\]
holds for every $p \le p_0$ when {A-1}--{A-5} hold. One
could consult, for example, \cite{Krylov-book} for more details or just
observe that the application of It\^{o}'s
formula to $|X(t)|^{p_0}$, along with {A-4}, {A-5} and the
application of Gronwall's and Fatou's lemmas yields the desired result.
Furthermore,
due to {B-2}, {B-3} and Remark~\ref{moment_N_bound}, one obtains
on the application of It\^{o}'s formula
%
\begin{eqnarray}
\label{Ito_p_0} &&\mathbb{E}\bigl|X_n(t)\bigr|^{p_0} \nonumber\\
&&\qquad\le
\mathbb{E}\bigl|X(0)\bigr|^{p_0} + \frac{p_0}{2}\mathbb{E}\int
_0^t\bigl |X_n(s)\bigr|^{p_0-2}K
\bigl(1+\bigl|X_n\bigl(\kappa_n(s)\bigr)\bigr|^2
\bigr) \,ds
\\
&&\qquad\quad{} + 2\mathbb {E}\int_0^t \bigl|X_n(s)\bigr|^{p_0-2}
\bigl\{X_n(s)-X_n\bigl(\kappa_n(s)
\bigr) \bigr\}b_n\bigl(s, X_n\bigl(
\kappa_n(s)\bigr)\bigr)\,ds.\nonumber
\end{eqnarray}
Thus, one needs to estimate the ``correction'' term
%
\begin{equation}
\label{E} E:= \mathbb{E}\int_0^t
\bigl|X_n(s)\bigr|^{p_0-2} \bigl\{X_n(s)-X_n
\bigl(\kappa _n(s)\bigr) \bigr\}b_n\bigl(s,
X_n\bigl(\kappa_n(s)\bigr)\bigr)\,ds.
\end{equation}
Then one calculates
%
\begin{eqnarray}
\label{E_1+E_2} E &= & \mathbb{E}\int_0^t
\bigl|X_n\bigl(\kappa_n(s)\bigr)\bigr|^{p_0-2} \bigl
\{X_n(s)-X_n\bigl(\kappa _n(s)\bigr)
\bigr\}b_n\bigl(s, X_n\bigl(\kappa_n(s)
\bigr)\bigr)\,ds
\nonumber
\\
&&{} + \mathbb{E}\int_0^t \bigl(\bigl|X_n(s)\bigr|^{p_0-2}
- \bigl|X_n\bigl(\kappa _n(s)\bigr)\bigr|^{p_0-2} \bigr)
\nonumber
\\[-8pt]
\\[-8pt]
\nonumber
&&{}\times\bigl\{X_n(s)-X_n\bigl(\kappa_n(s)
\bigr)\bigr\}b_n\bigl(s, X_n\bigl(
\kappa_n(s)\bigr)\bigr)\,ds
\\
&= & E_1 + E_2.\nonumber
\end{eqnarray}
Moreover, due to {B-2},
%
\begin{eqnarray}
\label{E_1} E_1&:= & \mathbb{E}\int_0^t
\bigl|X_n\bigl(\kappa_n(s)\bigr)\bigr|^{p_0-2} \bigl\{
X_n(s)-X_n\bigl(\kappa_n(s)\bigr)
\bigr\}b_n\bigl(s, X_n\bigl(\kappa_n(s)
\bigr)\bigr)\,ds
\nonumber
\\
&= & \mathbb{E}\int_0^t\bigl |X_n
\bigl(\kappa_n(s)\bigr)\bigr|^{p_0-2}\int_{\kappa
_n(s)}^{s}b_n
\bigl(u, X_n\bigl(\kappa_n(u)\bigr)\bigr)\,du
b_n\bigl(s, X_n\bigl(\kappa_n(s)\bigr)
\bigr)\,ds
\\
&&{} + \mathbb{E}\int_0^t \bigl|X_n
\bigl(\kappa_n(s)\bigr)\bigr|^{p_0-2}\nonumber\\
&&{}\times\int_{\kappa_n(s)}^{s}
\sigma_n\bigl(u, X_n\bigl(\kappa_n(u)
\bigr)\bigr)\,dW(u) b_n\bigl(s, X_n\bigl(\kappa
_n(s)\bigr)\bigr)\,ds
\nonumber
\\
&\le& \mathbb{E}\int_0^t \bigl|X_n
\bigl(\kappa _n(s)\bigr)\bigr|^{p_0-2}\nonumber\\
&&{}\times\int_{\kappa_n(s)}^{s}Cn^{\alpha}
\bigl(1 +\bigl|X_n\bigl(\kappa _n(u)\bigr)\bigr| \bigr)\,du
Cn^{\alpha} \bigl(1+ \bigl|X_n\bigl(\kappa_n(s)
\bigr)\bigr| \bigr)\,ds
\nonumber
\\
&\le& Cn^{2\alpha-1} \biggl(1 + \int_0^t
\mathbb{E}\bigl|X_n\bigl(\kappa_n(s)\bigr)\bigr|^{p_0}\,ds
\biggr)
\nonumber
\\
&\le& C \biggl(1 + \int_0^t \sup
_{r\le s}\mathbb{E}\bigl|X_n(r)\bigr|^{p_0}\,ds
\biggr).\nonumber
\end{eqnarray}
Furthermore, one uses It\^{o}'s formula for $p_0\ge 4$ in order to estimate $E_2$
[whereas for the case $2<p_0<4$, Lemma \ref{second_moment_bound} and 
the finiteness
of\break $\sup_{n\ge 1}\mathbb{E}\int_0^T |X_n(t) - X_n(\kappa_n(t))|^p |b_n(t,X_n(\kappa_n(t)))|^p\, dt$,
see Model 1 for example, are used to provide a uniform bound for \eqref{E}]. Note that
the case $p_0=2$ is covered by Lemma~\ref{second_moment_bound}:
\begin{eqnarray*}
E_2&:= & \mathbb{E}\int_0^t
\bigl(\bigl|X_n(s)\bigr|^{p_0-2} - \bigl|X_n\bigl(\kappa
_n(s)\bigr)\bigr|^{p_0-2} \bigr)\\
&&{}\times \bigl\{X_n(s)-X_n
\bigl(\kappa_n(s)\bigr) \bigr\}b_n\bigl(s,
X_n\bigl(\kappa_n(s)\bigr)\bigr)\,ds
\\
& =& \mathbb{E}\int_0^t \biggl[
(p_0-2)\int_{\kappa
_n(s)}^{s}\bigl|X_n(r)\bigr|^{p_0-4}X_n(r)b_n
\bigl(r, X_n\bigl(\kappa_n(r)\bigr)\bigr)\,dr
\\
& &{}+ (p_0-2) \biggl(\frac{p_0-2}{2}-1\biggr)\int
_{\kappa
_n(s)}^{s}\bigl|X_n(r)\bigr|^{p_0-6}\bigl|
\sigma^T_n\bigl(r, X_n\bigl(
\kappa_n(r)\bigr)\bigr)X_n(r)\bigr|^2\,dr\\
&&{} +\frac{(p_0-2)}{2}\int_{\kappa_n(s)}^{s}
\bigl|X_n(r)\bigr|^{p_0-4} \bigl|\sigma _n\bigl(r,
X_n\bigl(\kappa_n(r)\bigr)\bigr)\bigr|^2\,dr
\\
&&{} + (p_0-2)\int_{\kappa_n(s)}^{s}\bigl|X_n(r)\bigr|^{p_0-4}
X_n(r)\sigma_n\bigl(r, X_n\bigl(
\kappa_n(r)\bigr)\bigr) \,dW(r) \biggr]
\\
&&{} \times \biggl(\int_{\kappa_n(s)}^{s}b_n
\bigl(r, X_n\bigl(\kappa_n(r)\bigr)\bigr)\,dr + \int
_{\kappa_n(s)}^{s}\sigma_n\bigl(r,
X_n\bigl(\kappa_n(r)\bigr)\bigr)\,dW(r) \biggr)\\
&&{}\times
b_n\bigl(s, X_n\bigl(\kappa_n(s)\bigr)
\bigr)\,ds
\end{eqnarray*}
and thus
%
\begin{eqnarray}
\label{E_2} E_2 &\le& C \biggl(\mathbb{E}\int_0^t
\int_{\kappa
_n(s)}^{s}\bigl|X_n(r)\bigr|^{p_0-3}\bigl|b_n
\bigl(r, X_n\bigl(\kappa_n(r)\bigr)\bigr)\bigr|\,dr\nonumber\\
&&{}\times\int
_{\kappa
_n(s)}^{s}\bigl|b_n\bigl(r,
X_n\bigl(\kappa_n(r)\bigr)\bigr)\bigr|\,dr
\bigl|b_n\bigl(s, X_n\bigl(\kappa _n(s)
\bigr)\bigr)\bigr|\,ds
\\
&&{} + \mathbb{E}\int_0^t \int
_{\kappa_n(s)}^{s}\bigl|X_n(r)\bigr|^{p_0-3}\bigl|b_n
\bigl(r, X_n\bigl(\kappa_n(r)\bigr)\bigr)\bigr|\,dr\nonumber\\
&&{}\times \biggl|\int
_{\kappa_n(s)}^{s}\sigma_n\bigl(r,
X_n\bigl(\kappa _n(r)\bigr)\bigr)\,dW(r)\biggr|
\bigl|b_n\bigl(s, X_n\bigl(\kappa_n(s)\bigr)
\bigr)\bigr|\,ds
\nonumber
\\
&&{} + \mathbb{E}\int_0^t\int
_{\kappa_n(s)}^{s} \bigl|X_n(r)\bigr|^{p_0-4} \bigl|
\sigma _n\bigl(r, X_n\bigl(\kappa_n(r)
\bigr)\bigr)\bigr|^2\,dr\nonumber\\
&&{}\times \int_{\kappa_n(s)}^{s}\bigl|b_n
\bigl(r, X_n\bigl(\kappa _n(r)\bigr)
\bigr)\bigr|\,dr\bigl|b_n\bigl(s, X_n\bigl(\kappa_n(s)
\bigr)\bigr)\bigr|\,ds
\nonumber
\\
&&{} + \mathbb{E}\int_0^t\int
_{\kappa_n(s)}^{s} \bigl|X_n(r)\bigr|^{p_0-4} \bigl|
\sigma _n\bigl(r, X_n\bigl(\kappa_n(r)
\bigr)\bigr)\bigr|^2\,dr\nonumber
\\
&&{} \times\biggl|\int_{\kappa_n(s)}^{s}\sigma_n
\bigl(r, X_n\bigl(\kappa_n(r)\bigr)\bigr)\,dW(r)\biggr|
\bigl|b_n\bigl(s, X_n\bigl(\kappa_n(s)\bigr)
\bigr)\bigr|\,ds \biggr)
\nonumber
\\
&&{} + (p_0-2)\mathbb{E}\int_0^t
\int_{\kappa_n(s)}^{s}\bigl|X_n(r)\bigr|^{p_0-4}
X_n(r)\sigma_n\bigl(r, X_n\bigl(
\kappa_n(r)\bigr)\bigr) \,dW(r)
\nonumber
\\
&&{} \times\int_{\kappa_n(s)}^{s}b_n\bigl(r,
X_n\bigl(\kappa_n(r)\bigr)\bigr)\,dr\, b_n
\bigl(s, X_n\bigl(\kappa_n(s)\bigr)\bigr)\,ds
\nonumber
\\
& &{}+ (p_0-2)\mathbb{E}\int_0^t
\int_{\kappa_n(s)}^{s}\bigl|X_n(r)\bigr|^{p_0-4}
X_n(r)\sigma_n\bigl(r, X_n\bigl(
\kappa_n(r)\bigr)\bigr) \,dW(r)
\nonumber
\\
&&{} \times\int_{\kappa_n(s)}^{s}\sigma_n
\bigl(r, X_n\bigl(\kappa_n(r)\bigr)\bigr)\,dW(r)
b_n\bigl(s, X_n\bigl(\kappa_n(s)\bigr)
\bigr)\,ds
\nonumber
\\
& \le& C (E_{21} + E_{22} + E_{23} +
E_{24} ) + (p_0-2)E_{25} +
(p_0-2)E_{26}.\nonumber
\end{eqnarray}
One estimates $E_{21}$--$E_{26}$ by using Young's and H\"{o}lder's
inequalities as well as {B-2}. More precisely,
%
\begin{eqnarray}
\label{E_21} E_{21}&:= & \mathbb{E}\int_0^t
\int_{\kappa
_n(s)}^{s}\bigl|X_n(r)\bigr|^{p_0-3}\bigl|b_n
\bigl(r, X_n\bigl(\kappa_n(r)\bigr)\bigr)\bigr|\,dr
\nonumber
\\
& &{}\times\int_{\kappa_n(s)}^{s}\bigl|b_n\bigl(r,
X_n\bigl(\kappa_n(r)\bigr)\bigr)\bigr|\,dr
\bigl|b_n\bigl(s, X_n\bigl(\kappa_n(s)\bigr)
\bigr)\bigr|\,ds
\nonumber
\\
& \le&\mathbb{E}\int_0^t Cn^{3\alpha-1}\int
_{\kappa
_n(s)}^{s}\bigl|X_n(r)\bigr|^{p_0-3}
\bigl(1 +\bigl|X_n\bigl(\kappa_n(s)\bigr)\bigr|
\bigr)^3\,dr\,ds
\nonumber
\\[-8pt]
\\[-8pt]
\nonumber
& \le& Cn^{3\alpha-2} \biggl(1 + \int_0^t
\sup_{r\le s}\mathbb {E}\bigl|X_n(r)\bigr|^{p_0}\,ds \\
&&{}+
\int_0^t \mathbb{E}\bigl|X_n\bigl(
\kappa_n(s)\bigr)\bigr|^{p_0}\,ds \biggr)
\nonumber
\\
& \le& C \biggl(1 + \int_0^t \sup
_{r\le s}\mathbb {E}\bigl|X_n(r)\bigr|^{p_0}\,ds
\biggr),\nonumber
\end{eqnarray}
and
\begin{eqnarray*}
E_{22}&:= & \mathbb{E}\int_0^t \int
_{\kappa
_n(s)}^{s}\bigl|X_n(r)\bigr|^{p_0-3}\bigl|b_n
\bigl(r, X_n\bigl(\kappa_n(r)\bigr)\bigr)\bigr|\,dr
\\
& &{}\times\biggl|\int_{\kappa_n(s)}^{s}\sigma_n
\bigl(r, X_n\bigl(\kappa_n(r)\bigr)\bigr)\,dW(r)\biggr|
\bigl|b_n\bigl(s, X_n\bigl(\kappa_n(s)\bigr)
\bigr)\bigr|\,ds
\\
\hspace*{-1pt}& \le&\hspace*{-1pt}\mathbb{E}\int_0^t\hspace*{-0.5pt} \biggl\{ \biggl(\int
_{\kappa
_n(s)}^{s}\hspace*{-0.5pt}\bigl|X_n(r)\bigr|^{p_0-3}\bigl|b_n
\bigl(r, X_n\bigl(\kappa_n(r)\bigr)
\bigr)\bigr|\,dr\bigl|b_n\bigl(s, X_n\bigl(\kappa
_n(s)\bigr)\bigr)\bigr|\hspace*{-1pt} \biggr)^{{p_0}/{(p_0-1)}}
\\
&&{} + \biggl|\int_{\kappa_n(s)}^{s}\sigma_n\bigl(r,
X_n\bigl(\kappa_n(r)\bigr)\bigr)\,dW(r)
\biggr|^{p_0} \biggr\}\,ds
\\
& \le&\mathbb{E}\int_0^t \biggl\{
\biggl(Cn^{2\alpha}\int_{\kappa
_n(s)}^{s}\bigl|X_n(r)\bigr|^{p_0-3}
\bigl(1 +\bigl|X_n\bigl(\kappa_n(s)\bigr)\bigr|
\bigr)^2\,dr \biggr)^{{p_0}/{(p_0-1)}}
\\
&&{} + \biggl|\int_{\kappa_n(s)}^{s}\sigma_n\bigl(r,
X_n\bigl(\kappa_n(r)\bigr)\bigr)\,dW(r)
\biggr|^{p_0} \biggr\} \,ds
\\
& \le&\mathbb{E}\int_0^t \biggl(Cn^{2\alpha}
\int_{\kappa_n(s)}^{s}\bigl(1 + \bigl|X_n(r)\bigr|^{p_0-1}
+ \bigl|X_n\bigl(\kappa_n(s)\bigr)\bigr|^{p_0-1}\bigr)\,dr
\biggr)^{{p_0}/{(p_0-1)}} \,ds
\\
&&{} + \int_0^t\mathbb{E} \biggl(\int
_{\kappa_n(s)}^{s}\bigl|\sigma_n\bigl(r,
X_n\bigl(\kappa _n(r)\bigr)\bigr)\bigr|^2\,dr
\biggr)^{p_0/2}\,ds
\\
& \le& Cn^{(2\alpha-1)({p_0}/{(p_0-1)})}\int_0^t \Bigl(1 +
\sup_{r\le
s}\mathbb{E}\bigl|X_n(r)\bigr|^{p_0} +
\mathbb{E}\bigl|X_n\bigl(\kappa_n(s)\bigr)\bigr|^{p_0}
\Bigr)\,ds
\\
&&{} + \int_0^t \mathbb{E} \biggl(\int
_{\kappa_n(s)}^{s}Cn^{\alpha}\bigl(1 +
\bigl|X_n\bigl(\kappa_n(r)\bigr)\bigr|^2\bigr)\,dr
\biggr)^{p_0/2}\,ds
\\
& \le& C \biggl(1 + \int_0^t \sup
_{r\le s}\mathbb{E}\bigl|X_n(r)\bigr|^{p_0} \,ds
\biggr)\\
&&{} + Cn^{(\alpha-1)({p_0}/{2})} \biggl(1 + \int_0^t
\mathbb{E} \bigl|X_n\bigl(\kappa _n(s)\bigr)\bigr|^{p_0}\,ds
\biggr),
\end{eqnarray*}
which yields
%
\begin{equation}
\label{E_22} E_{22} \le C \biggl(1 + \int_0^t
\sup_{r\le s}\mathbb{E}\bigl|X_n(r)\bigr|^{p_0} \,dr
\biggr).
\end{equation}
Furthermore,
%
\begin{eqnarray}
\label{E_23} E_{23}&:= & \mathbb{E}\int_0^t
\int_{\kappa_n(s)}^{s} \bigl|X_n(r)\bigr|^{p_0-4}
\bigl|\sigma_n\bigl(r, X_n\bigl(\kappa_n(r)
\bigr)\bigr)\bigr|^2\,dr
\nonumber
\\
&&{} \times\int_{\kappa_n(s)}^{s}\bigl|b_n\bigl(r,
X_n\bigl(\kappa_n(r)\bigr)\bigr)\bigr|\,dr\bigl|b_n
\bigl(s, X_n\bigl(\kappa_n(s)\bigr)\bigr)\bigr|\,ds
\\
& \le&\mathbb{E}\int_0^t Cn^{3\alpha-1}\int
_{\kappa
_n(s)}^{s}\bigl|X_n(r)\bigr|^{p_0-4}
\bigl(1 +\bigl|X_n\bigl(\kappa_n(s)\bigr)\bigr|^2
\bigr)
\nonumber
\\
&&{}\times \bigl(1 +\bigl|X_n\bigl(\kappa_n(s)\bigr)\bigr|
\bigr)^2\,dr\,ds\nonumber
\\
& \le& Cn^{3\alpha-2} \biggl(1 + \int_0^t
\sup_{r\le s}\mathbb{E}\bigl|X_n(r)\bigr|^{p_0}\,ds +
\int_0^t \mathbb {E}\bigl|X_n\bigl(
\kappa_n(s)\bigr)\bigr|^{p_0}\,ds \biggr)
\nonumber
\\
& \le& C \biggl(1 + \int_0^t \sup
_{r\le s}\mathbb{E}\bigl|X_n(r)\bigr|^{p_0}\,ds \biggr)\nonumber
\end{eqnarray}
and
\begin{eqnarray*}
E_{24}&:= & \mathbb{E}\int_0^t\int
_{\kappa_n(s)}^{s} \bigl|X_n(r)\bigr|^{p_0-4} \bigl|
\sigma_n\bigl(r, X_n\bigl(\kappa_n(r)
\bigr)\bigr)\bigr|^2\,dr
\\
&&{} \times\biggl|\int_{\kappa_n(s)}^{s}\sigma_n
\bigl(r, X_n\bigl(\kappa_n(r)\bigr)\bigr)\,dW(r)\biggr|
\bigl|b_n\bigl(s, X_n\bigl(\kappa_n(s)\bigr)
\bigr)\bigr|\,ds
\\
& \le&\mathbb{E}\int_0^t \biggl\{ \biggl(\int
_{\kappa_n(s)}^{s} \bigl|X_n(r)\bigr|^{p_0-4}\bigl |
\sigma_n\bigl(r, X_n\bigl(\kappa_n(r)
\bigr)\bigr)\bigr|^2\,dr\\
&&{}\times\bigl|b_n\bigl(s, X_n\bigl(
\kappa _n(s)\bigr)\bigr)\bigr| \biggr)^{{p_0}/{(p_0-1)}}
\\
&&{} + \biggl|\int_{\kappa_n(s)}^{s}\sigma_n\bigl(r,
X_n\bigl(\kappa_n(r)\bigr)\bigr)\,dW(r)
\biggr|^{p_0} \biggr\}\,ds
\\
& \le&\int_0^t\mathbb{E} \biggl[\int
_{\kappa_n(s)}^{s} \bigl|X_n(r)\bigr|^{p_0-4}
Cn^{\alpha} \bigl(1 + \bigl|X_n\bigl(\kappa_n(r)
\bigr)\bigr|^2 \bigr)\,dr\\
&&{}\times Cn^{\alpha} \bigl(1 + \bigl|X_n
\bigl(\kappa_n(s)\bigr)\bigr| \bigr) \biggr]^{{p_0}/{(p_0-1)}}\,ds
\\
&&{} +\int_0^t \mathbb{E} \biggl|\int
_{\kappa_n(s)}^{s}\sigma_n\bigl(r,
X_n\bigl(\kappa _n(r)\bigr)\bigr)\,dW(r)
\biggr|^{p_0}\,ds
\\
& \le&\mathbb{E}\int_0^t \biggl(Cn^{2\alpha}
\int_{\kappa_n(s)}^{s}\bigl(1 + \bigl|X_n(r)\bigr|^{p_0-1}
+ \bigl|X_n\bigl(\kappa_n(s)\bigr)\bigr|^{p_0-1}\bigr)\,dr
\biggr)^{{p_0}/{(p_0-1)}} \,ds
\\
&&{} + \int_0^t\mathbb{E} \biggl(\int
_{\kappa_n(s)}^{s}\bigl|\sigma_n\bigl(r,
X_n\bigl(\kappa _n(r)\bigr)\bigr)\bigr|^2\,dr
\biggr)^{p_0/2}\,ds
\\
& \le& Cn^{(2\alpha-1)({p_0}/{(p_0-1)})}\int_0^t \Bigl(1 +
\sup_{r\le
s}\mathbb{E}\bigl|X_n(r)\bigr|^{p_0} +
\mathbb{E}\bigl|X_n\bigl(\kappa_n(s)\bigr)\bigr|^{p_0}
\Bigr)\,ds
\\
&&{} + \int_0^t \mathbb{E} \biggl(\int
_{\kappa_n(s)}^{s}Cn^{\alpha}\bigl(1 +
\bigl|X_n\bigl(\kappa_n(r)\bigr)\bigr|^2\bigr)\,dr
\biggr)^{p_0/2}\,ds
\\
& \le& C \biggl(1 + \int_0^t \sup
_{r\le s}\mathbb{E}\bigl|X_n(r)\bigr|^{p_0} \,ds
\biggr)\\
&&{} + Cn^{(\alpha-1)({p_0}/{2})} \biggl(1 + \int_0^t
\mathbb{E} \bigl|X_n\bigl(\kappa _n(s)\bigr)\bigr|^{p_0}\,ds
\biggr),
\end{eqnarray*}
which also yields
%
\begin{equation}
\label{E_24} E_{24} \le C \biggl(1 + \int_0^t
\sup_{r\le s}\mathbb{E}\bigl|X_n(r)\bigr|^{p_0} \,dr
\biggr).
\end{equation}
Finally,
%
\begin{eqnarray}
\label{E_25} E_{25}&:= &\mathbb{E}\int_0^t
\int_{\kappa_n(s)}^{s}\bigl|X_n(r)\bigr|^{p_0-4}
X_n(r)\sigma_n\bigl(r, X_n\bigl(
\kappa_n(r)\bigr)\bigr) \,dW(r)
\nonumber
\\[-8pt]
\\[-8pt]
\nonumber
&&{} \times\int_{\kappa_n(s)}^{s}b_n\bigl(r,
X_n\bigl(\kappa_n(r)\bigr)\bigr)\,dr\, b_n
\bigl(s, X_n\bigl(\kappa_n(s)\bigr)\bigr)\,ds = 0
\end{eqnarray}
and
%
\begin{eqnarray}
\label{E_26} E_{26}&:= & \mathbb{E}\int_0^t
\int_{\kappa_n(s)}^{s}\bigl|X_n(r)\bigr|^{p_0-4}
X_n(r)\sigma_n\bigl(r, X_n\bigl(
\kappa_n(r)\bigr)\bigr) \,dW(r)
\nonumber
\\
& &{}\times\int_{\kappa_n(s)}^{s}\sigma_n
\bigl(r, X_n\bigl(\kappa_n(r)\bigr)\bigr)\,dW(r)
b_n\bigl(s, X_n\bigl(\kappa_n(s)\bigr)
\bigr)\,ds
\nonumber
\\
&= & \mathbb{E}\int_0^t \int
_{\kappa_n(s)}^{s}\bigl|X_n(r)\bigr|^{p_0-4}
X_n(r)\sigma_n\bigl(r, X_n\bigl(
\kappa_n(r)\bigr)\bigr)\sigma_n^T\bigl(r,
X_n\bigl(\kappa_n(r)\bigr)\bigr)\,dr\nonumber\\
&&{}\times  b_n
\bigl(s, X_n\bigl(\kappa _n(s)\bigr)\bigr)\,ds
\nonumber
\\[-8pt]
\\[-8pt]
\nonumber
& \le& \mathbb{E}\int_0^t \int
_{\kappa_n(s)}^{s}\bigl|X_n(r)\bigr|^{p_0-3}\bigl|
\sigma_n\bigl(r, X_n\bigl(\kappa_n(r)
\bigr)\bigr)\bigr|^2\,dr \bigl|b_n\bigl(s, X_n\bigl(
\kappa_n(s)\bigr)\bigr)\bigr|\,ds
\\
& \le& \mathbb{E}\int_0^t Cn^{2\alpha}
\int_{\kappa
_n(s)}^{s}\bigl|X_n(r)\bigr|^{p_0-3}
\bigl(1 +\bigl|X_n\bigl(\kappa_n(s)\bigr)\bigr|^2
\bigr) \bigl(1 +\bigl|X_n\bigl(\kappa_n(s)\bigr)\bigr|
\bigr)\,dr\,ds
\nonumber
\\
& \le& Cn^{2\alpha-1} \biggl(1 + \int_0^t
\sup_{r\le s}\mathbb{E}\bigl|X_n(r)\bigr|^{p_0}\,ds +
\int_0^t \mathbb {E}\bigl|X_n\bigl(
\kappa_n(s)\bigr)\bigr|^{p_0}\,ds \biggr)
\nonumber
\\
& \le& C \biggl(1 + \int_0^t \sup
_{r\le s}\mathbb{E}\bigl|X_n(r)\bigr|^{p_0}\,ds
\biggr).\nonumber
\end{eqnarray}
Thus, due to \eqref{E_21}--\eqref{E_26}, \eqref{E_1}, \eqref{E_2} and
\eqref{E_1+E_2},
\begin{eqnarray*}
&&\mathbb{E}\int_0^t \bigl|X_n(s)\bigr|^{p_0-2}
\bigl\{X_n(s)-X_n\bigl(\kappa_n(s)
\bigr) \bigr\} b_n\bigl(s, X_n\bigl(
\kappa_n(s)\bigr)\bigr)\,ds\\
&&\qquad \le C \biggl(1 + \int_0^t
\sup_{r\le s}\mathbb {E}\bigl|X_n(r)\bigr|^{p_0}\,ds
\biggr),
\end{eqnarray*}
which yields due to \eqref{Ito_p_0} and Young's inequality that
%
\begin{eqnarray}
\label{p0-th moment}
\mathbb{E}\bigl|X_n(t)\bigr|^{p_0}& \le& C \biggl(1 +
\mathbb{E}\bigl|X(0)\bigr|^{p_0} + \mathbb{E}\int_0^t
\bigl|X_n(s)\bigr|^{p_0} \,ds \nonumber\\
&&{}+ \mathbb{E}\int_0^t
\bigl(1+\bigl|X_n\bigl(\kappa_n(s)\bigr)\bigr|^2
\bigr)^{p_0/2} \,ds\biggr)
\nonumber
\\[-8pt]
\\[-8pt]
\nonumber
&&{} + 2\mathbb{E}\int_0^t \bigl|X_n(s)\bigr|^{p_0-2}
\bigl\{X_n(s)-X_n\bigl(\kappa_n(s)
\bigr) \bigr\}b_n\bigl(s, X_n\bigl(
\kappa_n(s)\bigr)\bigr)\,ds
\\
&\le& C \biggl(1 + \mathbb{E}\bigl|X(0)\bigr|^{p_0} + \int_0^t
\sup_{0\le u\le
s}\mathbb{E}\bigl|X_n(u)\bigr|^{p_0} \,ds
\biggr)<\infty\nonumber
\end{eqnarray}
due to \eqref{n_bound}. The application of Gronwall's lemma yields the
desired result.
\end{pf}

\begin{remark} \label{split}
In order to ease notation, it is chosen not to explicitly present the
calculations for, and thus it is left as an exercise to the reader, the
case where the drift and the diffusion coefficient(s) have the
following representation:
\[
b(t,x) = b^{(1)}(t,x)+b^{(2)}(t,x)\quad \mbox{and}\quad \sigma(t,x) =
\sigma ^{(1)}(t,x)+\sigma^{(2)}(t,x),
\]
where $b^{(1)}(t,x)$ and $\sigma^{(1)}(t,x)$ are Lipschitz continuous
and grow at most linearly (in $x$) and the nonlinearities, that is,
super-linear growth, appear in $b^{(2)}(t,x)$ and in $\sigma
^{(2)}(t,x)$. In such a case, the analysis for $b^{(1)}(t,x)$ and
$\sigma^{(1)}(t,x)$ follows closely the classical approach, see also
``correction'' term $E$ in \eqref{E}. Note also that in such a case, $b(t,x)$
and $\sigma(t,x)$ are replaced by $b^{(2)}(t,x)$ and $\sigma
^{(2)}(t,x)$, respectively, in \eqref{b_n}, \eqref{sigma_n}, \eqref
{b2_n} and \eqref{sigma2_n}.
\end{remark}

\section{Proof of main results}

\subsection{$\mathcal{L}^p$-convergence}
\mbox{}
\begin{pf*}{Proof of Theorem~\ref{mainthm}}
This is now a direct consequence of Theorem~\ref{cp_main} and Lemma~\ref
{optimal_moment_bound}.
\end{pf*}

\begin{lemma}
\label{b_n_to_b}
Consider the numerical scheme \eqref{tem} with coefficients $b_n$ and
$\sigma_n$ given by \eqref{b2_n} and \eqref{sigma2_n}, respectively.
Suppose \textup{{A-2}}, \textup{{A-4}--{A-6}} and $p\le\frac
{p_0}{2l+1}$. Then
%
\begin{equation}
\label{difference_b} \mathbb{E}\biggl[\int_0^T\bigl |b
\bigl(s, X_n\bigl(\kappa_n(s)\bigr)\bigr)-
b_n\bigl(s, X_n\bigl(\kappa _n(s)\bigr)
\bigr)\bigr|^p\,ds \biggr] \le C n^{-\alpha p}
\end{equation}
and
%
\begin{equation}
\label{difference_sigma} \mathbb{E}\biggl[\int_0^T\bigl |
\sigma\bigl(s, X_n\bigl(\kappa_n(s )\bigr)\bigr) -
\sigma_n\bigl(s, X_n\bigl(\kappa_n(s )
\bigr)\bigr)\bigr|^p\,ds\biggr] \le C n^{-\alpha p},
\end{equation}
where $C$ is a constant independent of $n$.
\end{lemma}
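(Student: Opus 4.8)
The plan is to use the explicit form of Model~2. Since $b_n(t,x)=(1+n^{-\alpha}|x|^l)^{-1}b(t,x)$, one has $b(t,x)-b_n(t,x)=b(t,x)\,n^{-\alpha}|x|^l/(1+n^{-\alpha}|x|^l)$, so the trivial inequality $y/(1+y)\le y$ for $y\ge 0$ yields the pointwise bounds
$$
|b(t,x)-b_n(t,x)|\le n^{-\alpha}|x|^l\,|b(t,x)|,\qquad |\sigma(t,x)-\sigma_n(t,x)|\le n^{-\alpha}|x|^l\,|\sigma(t,x)|.
$$
Raising to the $p$-th power factors out $n^{-\alpha p}$, so it remains to show that $\mathbb{E}\int_0^T|X_n(\kappa_n(s))|^{lp}|b(s,X_n(\kappa_n(s)))|^p\,ds$ and the corresponding expression with $\sigma$ in place of $b$ are bounded uniformly in $n$.

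For this I would insert the polynomial growth estimates of Remark~\ref{poly_growth_remark}. By \eqref{poly_growth}, $|b(t,x)|^p\le C(1+|x|^{(l+1)p})$, hence $|x|^{lp}|b(t,x)|^p\le C(1+|x|^{(2l+1)p})$; by \eqref{diffusion_poly_growth}, $|\sigma(t,x)|^p\le C(1+|x|^{(l+2)p/2})$, hence $|x|^{lp}|\sigma(t,x)|^p\le C(1+|x|^{(3l+2)p/2})$. The decisive bookkeeping point is that neither exponent exceeds $p_0$: the hypothesis $p\le p_0/(2l+1)$ gives $(2l+1)p\le p_0$ immediately, and since $l\ge 0$ one has $(3l+2)/2\le 2l+1$, whence $(3l+2)p/2\le(2l+1)p\le p_0$ as well.

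It then suffices to take expectations, integrate over $[0,T]$, and apply the uniform-in-$n$ moment bound of Lemma~\ref{optimal_moment_bound} — whose hypotheses hold here because \textbf{A-6} implies \textbf{A-1} and \textbf{A-3}, while the coefficients of Model~2 satisfy \textbf{B-1}--\textbf{B-3} — which controls $\sup_{n\ge1}\sup_{0\le t\le T}\mathbb{E}|X_n(t)|^q$ for every $q\le p_0$. This delivers \eqref{difference_b} and \eqref{difference_sigma} with a constant independent of $n$. There is no genuine difficulty beyond the exponent arithmetic; the one thing to watch is that the power of $|X_n(\kappa_n(s))|$ generated by combining $|x|^{lp}$ with the growth of $b$ or $\sigma$ never exceeds $p_0$, which is exactly what the constraint $p\le p_0/(2l+1)$ (part of the $\mathfrak{p}$-\textbf{condition}) is there to guarantee.
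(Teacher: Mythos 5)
Your argument is correct and is essentially the paper's own proof: factor out $n^{-\alpha p}$ from the explicit Model~2 form, bound the remaining ratio by $|x|^{lp}$ times the coefficient, invoke the polynomial growth bounds \eqref{poly_growth} and \eqref{diffusion_poly_growth}, and close with the uniform moment bound of Lemma~\ref{optimal_moment_bound} using $(2l+1)p\le p_0$. The only (welcome) difference is that you spell out the exponent check for the diffusion term, which the paper dismisses with ``the same technique.''
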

\begin{pf}
One immediately observes that, due to \eqref{poly_growth}, \eqref
{diffusion_poly_growth}, \eqref{b2_n} and \eqref{sigma2_n}
\begin{eqnarray*}
&&\mathbb{E}\biggl[\int_0^T \bigl|b\bigl(s,
X_n\bigl(\kappa_n(s)\bigr)\bigr)-b_n
\bigl(s, X_n\bigl(\kappa _n(s)\bigr)
\bigr)\bigr|^p \,ds\biggr]
\\
&&\qquad\le n^{-\alpha p}\mathbb{E} \biggl[\int_0^T
\frac{|X_n(\kappa
_n(s))|^{lp}}{(1 + n^{-\alpha}\bigl|X_n(\kappa
_n(s))\bigr|^{l})^{p}}\bigl|b\bigl(t,X_n\bigl(\kappa_n(s)\bigr)
\bigr)\bigr|^{p} \,dt\biggr]
\\
&&\qquad\le Cn^{-\alpha p}\mathbb{E} \biggl[\int_0^{T}
\bigl|X_n\bigl(\kappa_n(s)\bigr)\bigr|^{lp}
\bigl(1+\bigl|X_n\bigl(\kappa_n(s)\bigr)\bigr|^{l+1}
\bigr)^p\,ds \biggr],
\end{eqnarray*}
which implies \eqref{difference_b} due to Lemma~\ref
{optimal_moment_bound} and the assumption that $p\le\frac{p_0}{2l+1}$.
One applies the same technique in order to obtain (\ref{difference_sigma}).
\end{pf}

\begin{lemma} \label{one-step-rate_lemma}
Consider the numerical scheme \eqref{tem}. Let \textup{{A-2}},
\textup{{A-4}--{A-6}} and \textup{{B-2}} with $\alpha=1/2$ hold, then for
any positive $p \le\max(2, \frac{2p_0}{l+2})$ and $l\le p_0-2$,
%
\begin{equation}
\label{one-step-rate} \sup_{0\le t \le T}\mathbb E \bigl|X_n(t)-X_n
\bigl(\kappa_n(t)\bigr)\bigr|^p \le Cn^{-p/2},
\end{equation}
where $C$ is a positive constant independent of $n$.
\end{lemma}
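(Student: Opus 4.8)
The plan is to start from the integral form of the scheme on a mesh interval. For $t\in[0,T]$ write
\[
X_n(t)-X_n(\kappa_n(t)) = \int_{\kappa_n(t)}^{t} b_n(s,X_n(\kappa_n(s)))\,ds + \int_{\kappa_n(t)}^{t} \sigma_n(s,X_n(\kappa_n(s)))\,dW(s),
\]
and note that on $[\kappa_n(t),t]$ the argument $\kappa_n(s)$ is constant and equal to $\kappa_n(t)$, so both integrands are ``frozen''. Taking $p$-th moments and using the elementary inequality $|a+b|^p\le 2^{p-1}(|a|^p+|b|^p)$, it suffices to bound the drift term and the diffusion term separately. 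For the drift term, when $p\ge 1$ I would apply Jensen's (or H\"older's) inequality in the time variable over an interval of length at most $1/n$, giving a factor $n^{-(p-1)}$ out front and $\int_{\kappa_n(t)}^t |b_n|^p\,ds$ inside; for $p<1$ I would instead use $|\int|^p\le (\int|\cdot|)^p$ together with H\"older to reach the same shape. For the diffusion term I would invoke the Burkholder--Davis--Gundy inequality (for $p\ge 2$; for $p<2$ first raise to a power $\ge 2$ and use Jensen, or use BDG with the $L^1$ norm of the quadratic variation), which reduces it to $\mathbb{E}\big(\int_{\kappa_n(t)}^t |\sigma_n(s,X_n(\kappa_n(s)))|^2\,ds\big)^{p/2}$, and again Jensen in time over the short interval yields a factor $n^{-p/2}$ times $\mathbb{E}\int_{\kappa_n(t)}^t |\sigma_n|^p\,ds$.

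The next step is to control the ``frozen'' coefficient moments $\mathbb{E}|b_n(s,X_n(\kappa_n(s)))|^p$ and $\mathbb{E}|\sigma_n(s,X_n(\kappa_n(s)))|^p$ uniformly in $s$ and $n$. Using \textbf{B-2} with $\alpha=1/2$ one has $|b_n(s,x)|\le \min(Cn^{1/2}(1+|x|),|b(s,x)|)$ and $|\sigma_n(s,x)|^2\le \min(Cn^{1/2}(1+|x|^2),|\sigma(s,x)|^2)$; combining the two bounds in the min via $\min(A,B)\le A^{\theta}B^{1-\theta}$ lets me trade the explicit $n^{1/2}$ growth against the polynomial growth of the true coefficients from Remark \ref{poly_growth_remark} (i.e. $|b(s,x)|\le N(1+|x|^{l+1})$ and $|\sigma(s,x)|^2\le C(1+|x|^{l+2})$). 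Concretely, for the drift I would write $|b_n|\le (Cn^{1/2}(1+|x|))^{1/(l+1)}|b(s,x)|^{l/(l+1)} \le Cn^{1/(2(l+1))}(1+|x|)$, so that the drift contribution to the one-step bound carries an exponent $n^{-(p-1)}\cdot n^{p/(2(l+1))}$ against a uniformly bounded moment $\mathbb{E}(1+|X_n(\kappa_n(s))|)^p$ supplied by Lemma \ref{optimal_moment_bound}; for $p\le 2$ and $l\le p_0-2$ (hence a fortiori the exponent of $n$ stays $\le -p/2$) this is $\le Cn^{-p/2}$. For the diffusion I would similarly interpolate $|\sigma_n|^2 \le (Cn^{1/2}(1+|x|^2))^{2/(l+2)}|\sigma(s,x)|^{2\cdot l/(l+2)}$, giving $|\sigma_n|^p \le Cn^{p/(2(l+2))}(1+|x|^p)$, and then $n^{-p/2}\cdot n^{p/(2(l+2))}\cdot \mathbb{E}(1+|X_n(\kappa_n(s))|^p) \le Cn^{-p/2}$ provided the moments are controlled, i.e. $p\le p_0$, which is guaranteed under the stated ranges $p\le\max(2,\tfrac{2p_0}{l+2})$ together with $l\le p_0-2$.

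Putting the two pieces together and taking the supremum over $t\in[0,T]$ (the bounds being uniform in $t$) gives \eqref{one-step-rate}. The main obstacle is the bookkeeping of exponents: one must check that the combination of the Jensen/H\"older time factors ($n^{-(p-1)}$ or $n^{-p/2}$), the interpolation factor from \textbf{B-2} ($n^{p/(2(l+1))}$ or $n^{p/(2(l+2))}$), and the requirement that the relevant moment of $X_n(\kappa_n(\cdot))$ be $\le p_0$ all remain consistent precisely on the parameter range $p\le\max(2,\tfrac{2p_0}{l+2})$, $l\le p_0-2$ — in particular that the net power of $n$ never exceeds $-p/2$ and that one never needs a moment of order higher than $p_0$. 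A minor subtlety, flagged in Remark \ref{split}, is that if $2<p_0<4$ one should split off the linearly growing parts of $b$ and $\sigma$ so that no negative powers of $|X_n|$ appear; the same device applies here and changes nothing essential in the estimate.
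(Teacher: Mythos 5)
Your skeleton --- splitting the increment into a drift part and a stochastic integral, applying H\"older/Jensen in time over an interval of length $1/n$, and using the Burkholder--Davis--Gundy inequality on the martingale term --- is exactly the paper's. The genuine gap is in how you bound $|b_n|$ and $|\sigma_n|$. The paper uses the two branches of the min in \textbf{B-2} \emph{purely}, not interpolated: for the drift it takes $|b_n(t,x)|\le Cn^{1/2}(1+|x|)$, so the drift term is $n^{-(p-1)}\cdot n^{-1}\cdot n^{p/2}\cdot\sup_s\mathbb E(1+|X_n(\kappa_n(s))|)^p\le Cn^{-p/2}$, needing only a $p$-th moment; for the diffusion it takes $|\sigma_n|^2\le|\sigma|^2\le C(1+|x|^{l+2})$ via \eqref{diffusion_poly_growth}, with \emph{no} power of $n$, so that BDG plus the interval length give $Cn^{-p/2}\sup_s\mathbb E\big(1+|X_n(\kappa_n(s))|^{l+2}\big)^{p/2}$ --- and this is precisely where the hypotheses $p\le\frac{2p_0}{l+2}$ (for $p>2$, via H\"older and Lemma \ref{optimal_moment_bound}) and $l\le p_0-2$ (for $p\le 2$, via Jensen for concave functions) enter. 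Your interpolation $\min(A,B)\le A^{\theta}B^{1-\theta}$ cannot reproduce the claimed rate for the diffusion term: any $\theta>0$ leaves a residual factor $n^{\theta p/4}$ on top of $n^{-p/2}$, so with your choice $\theta=\frac{2}{l+2}$ you obtain at best $Cn^{-p/2+p/(2(l+2))}$, strictly weaker than \eqref{one-step-rate}. Only $\theta=0$, i.e.\ the $|\sigma|^2$ branch alone, yields $n^{-p/2}$, and the price is exactly the moment of order $(l+2)p/2\le p_0$ that the lemma's parameter restrictions are designed to pay.

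There are also two computational slips worth flagging. First, the interpolated growth bounds are miscalculated: $(Cn^{1/2}(1+|x|))^{1/(l+1)}|b(s,x)|^{l/(l+1)}\le Cn^{1/(2(l+1))}\big(1+|x|^{\,l+\frac{1}{l+1}}\big)$, not $Cn^{1/(2(l+1))}(1+|x|)$ (the powers agree only when $l=0$), and similarly for $\sigma_n$; this would force moments of $X_n$ of order higher than $p$, which for $p$ close to $\frac{2p_0}{l+2}$ and $l$ large exceed $p_0$ and are not available. Second, in the drift bookkeeping you quote the net exponent $n^{-(p-1)}\cdot n^{p/(2(l+1))}$ and assert it is $\le n^{-p/2}$ for $p\le 2$; for $p=2$, $l=1$ this is $n^{-1/2}$ while $n^{-p/2}=n^{-1}$, so the claim fails as written --- you have dropped the additional factor $n^{-1}$ contributed by the length of $[\kappa_n(t),t]$ when the integrand's expectation is bounded. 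Both issues disappear if you abandon the interpolation and, as in the paper, use the $n^{1/2}$-branch of \textbf{B-2} for the drift and the polynomial-growth branch for the diffusion.
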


\begin{pf}
For any $p \in[1, \frac{2p_0}{l+2}]$ and every $t\in[0, T]$,
\begin{eqnarray*}
&&\mathbb E \bigl|X_n(t)-X_n\bigl(\kappa_n(t)
\bigr)\bigr|^{p}
\\
&&\qquad= \mathbb E\biggl| \int_{\kappa_n(t)}^{t}b_n
\bigl(r, X_n\bigl(\kappa_n(r)\bigr)\bigr)\,dr+\int
_{\kappa_n(t)}^{t}\sigma_n\bigl(r,
X_n\bigl(\kappa_n(r)\bigr)\bigr)\,dW(r) \biggr|^p
\end{eqnarray*}
and thus, due to H\"older's inequality,
%
\begin{eqnarray}
\label{g1} &&\mathbb E \bigl|X_n(t)-X_n\bigl(
\kappa_n(t)\bigr)\bigr|^p\nonumber\\
&&\qquad \le 2^{p-1} \bigl|t-
\kappa_n(t)\bigr|^{p-1}\mathbb{E}\int_{\kappa_n(t)}^t
\bigl|b_n\bigl(r, X_n\bigl(\kappa _n(r)
\bigr)\bigr)\bigr|^p \,dr
\\
&&\qquad\quad{} + 2^{p-1} \mathbb E \biggl|\int_{\kappa_n(t)}^t
\sigma_n\bigl(r, X_n\bigl(\kappa _n(r)
\bigr)\bigr)\,dW(r) \biggr|^p.\nonumber
\end{eqnarray}
One then observes that, due to {B-2},
%
\begin{eqnarray}
\label{g2}&& 2^{p-1} \bigl|t-\kappa_n(t)\bigr|^{p-1}
\mathbb E\int_{\kappa_n(t)}^t \bigl|b_n\bigl(r,
X_n\bigl(\kappa_n(r)\bigr)\bigr)\bigr|^p \,dr \nonumber\\
&&\qquad\le \biggl(\frac{2}{n} \biggr)^{p-1} \mathbb E\int
_{\kappa_n(t)}^t n^{\alpha
p} \bigl(1+
\bigl|X_n\bigl(\kappa_n(r)\bigr)\bigr| \bigr)^p \,dr
\\
&&\qquad \le C n^{(\alpha- 1)p}\nonumber
\end{eqnarray}
and, due to \eqref{diffusion_poly_growth}, one obtains
%
\begin{eqnarray}
\label{g3a}&& \mathbb E \biggl|\int_{\kappa_n(t)}^t
\sigma_n\bigl(r, X_n\bigl(\kappa_n(r)
\bigr)\bigr)\,dW(r)\biggr |^p \nonumber\\
&&\qquad\le C \mathbb E \biggl[ \biggl(\int
_{\kappa_n(t)}^t\bigl|\sigma_n\bigl(r,
X_n\bigl(\kappa_n(r)\bigr)\bigr)\bigr|^2 \,dr
\biggr)^{p/2} \biggr]
\\
&&\qquad\le C E \biggl[ \biggl(\int_{\kappa_n(t)}^t \bigl(1+
\bigl|X_n\bigl(\kappa _n(r)\bigr)\bigr|^{l+2} \bigr)
\,dr \biggr)^{p/2} \biggr] \le Cn^{-p/2}.\nonumber
\end{eqnarray}
This due to the fact that for the case $p>2$, H\"{o}lder's inequality
gives the desired result as $p \le\frac{2p_0}{l+2}$ and thus $\frac
{l+2}{2}p\le p_0$, and for the case $1\le p \le2$, one uses Jensen's
inequality for concave functions and/or the fact that $l\le p_0-2$.
Substituting \eqref{g2} and \eqref{g3a} in \eqref{g1} yields \eqref
{one-step-rate}. Similarly, one obtains the same result for $0<p<1$,
due to Jensen's inequality for concave functions, $l\le p_0-2$ and
\[
\mathbb E\bigl |X_n(t)-X_n\bigl(\kappa_n(t)
\bigr)\bigr|^{p} \le \bigl(\mathbb E\bigl |X_n(t)-X_n
\bigl(\kappa_n(t)\bigr)\bigr| \bigr)^p \le
\bigl(Cn^{-1/2}\bigr)^p.
\]
\upqed\end{pf}

\begin{pf*}{Proof of Theorem~\ref{order-thmm}}
One considers first, for every $n\ge1$ and $t\in[0,  T]$,
%
\begin{equation}
\label{def_chi_beta} \chi_n(t):=X(t)-X_n(t),\qquad
\beta_n(t):= b\bigl(t, X(t)\bigr)-b_n
\bigl(t,X_n\bigl(\kappa_n(t)\bigr)\bigr)
\end{equation}
and
%
\begin{equation}
\label{def_alpha} \alpha_n(t):=\sigma\bigl(t,X(t)\bigr)-
\sigma_n\bigl(t,X_n\bigl(\kappa_n(t)
\bigr)\bigr)
\end{equation}
to obtain for any $p\ge2$
%
\begin{eqnarray}
\label{xi} \bigl|\chi_n(t)\bigr|^p& \le&\frac{p}{2}
\int_{0}^t\bigl|\chi_n(s)\bigr|^{p-2}
\bigl[ 2\chi _n(s)\beta_n(s) + (p-1)\bigl|
\alpha_n(s)\bigr|^2 \bigr] \,ds
\nonumber
\\[-8pt]
\\[-8pt]
\nonumber
&&{}+ p \int_{0}^t\bigl|
\chi_n(s)\bigr|^{p-2}\chi _n(s)
\alpha_n(s) \,dW(s).
\end{eqnarray}
One then observes, for any $\varepsilon>0$,
%
\begin{eqnarray}
\label{sum-estimate}&& 2\chi_n(s)\beta_n(s)+ (p-1)\bigl|
\alpha_n(s)\bigr|^2\nonumber \\
&&\qquad=  2\bigl[X(s)-X_n(s)
\bigr] \bigl[b\bigl(s, X(s)\bigr)-b\bigl(s,X_n(s)\bigr)\bigr]
\nonumber
\\
&&\qquad\quad{} + 2\bigl[X(s)-X_n(s)\bigr] \bigl[b\bigl(s, X_n(s)
\bigr)-b\bigl(s,X_n\bigl(\kappa_n(s)\bigr)\bigr)\bigr]
\nonumber
\\
&&\qquad\quad{} + 2\bigl[X(s) - X_n(s)\bigr] \bigl[b\bigl(s,X_n
\bigl(\kappa_n(s)\bigr)\bigr)- b_n
\bigl(s,X_n\bigl(\kappa_n(s)\bigr)\bigr)\bigr]
\\
&&\qquad\quad{} + (1+\varepsilon) (p-1)\bigl|\sigma\bigl(s,X(s)\bigr)-\sigma\bigl(s,X_n(s)
\bigr)\bigr|^2
\nonumber
\\
&&\qquad\quad{} + 2\biggl(1+\frac{1}{\varepsilon}\biggr) (p-1)\bigl|\sigma\bigl(s,X_n(s)
\bigr)-\sigma \bigl(s,X_n\bigl(\kappa_n(s)\bigr)
\bigr)\bigr|^2
\nonumber
\\
&&\qquad\quad{} + 2\biggl(1+\frac{1}{\varepsilon}\biggr) (p-1)\bigl|\sigma\bigl(s,X_n
\bigl(\kappa _n(s)\bigr)\bigr)-\sigma_n
\bigl(s,X_n\bigl(\kappa_n(s)\bigr)
\bigr)\bigr|^2.\nonumber
\end{eqnarray}
One further observes that
\begin{eqnarray*}
&&(p_1-1)\bigl|\sigma(t,x)-\sigma(t,y)\bigr|^2\\
&&\qquad \le
L|x-y|^2-2(x-y) \bigl(b(t,x)-b(t,y)\bigr) \qquad\mbox{(due to
{A-6})}
\\
&&\qquad \le C\bigl(1+|x|^{l}+|y|^{l}\bigr)|x-y|^2
\end{eqnarray*}
and thus, due to {A-2}, {A-4}, {A-6} and the fact
that there exists an $\varepsilon$ such that $(1+\varepsilon)(p-1)\le p_1-1$
since it is assumed that $p < p_1$, estimate \eqref{sum-estimate} yields
%
\begin{eqnarray}
\label{sum-difference}&& 2\chi_n(s)\beta_n(s)+ (p-1)\bigl|
\alpha_n(s)\bigr|^2 \nonumber\\
&&\qquad\le C\bigl| \chi_n(s)\bigr|^2
+ C\bigl(1+ \bigl|X_n(s)\bigr|^{2l} +\bigl|X_n\bigl(
\kappa_n(s)\bigr)\bigr|^{2l}\bigr)
\nonumber
\\[-8pt]
\\[-8pt]
\nonumber
& &\qquad\quad{}\times\bigl|X_n(s)-X_n\bigl(\kappa_n(s)
\bigr)\bigr|^2+\bigl|b\bigl(s, X_n\bigl(\kappa _n(s)
\bigr)\bigr)-b_n\bigl(s,X_n\bigl(\kappa_n(s)
\bigr)\bigr)\bigr|^2
\nonumber
\\
& &\qquad\quad{}+ C\bigl|\sigma\bigl(s,X\bigl(\kappa_n(s)\bigr)\bigr)-
\sigma_n\bigl(s,X_n\bigl(\kappa_n(s)
\bigr)\bigr)\bigr|^2.\nonumber
\end{eqnarray}
Furthermore, by taking into consideration \eqref{xi}, \eqref{sum-difference},
Remark~\ref{moment_N_bound} and \eqref{difference_sigma}, one
obtains that
\begin{eqnarray*}
&&\mathbb{E}\bigl|\chi_n(t)\bigr|^p \\
&&\qquad\le  C\mathbb{E} \biggl[\int
_{0}^t \bigl\{\bigl |\chi_n(s)\bigr|^p
+ \bigl(1+\bigl |X_n(s)\bigr|^{2l} +\bigl |X_n\bigl(
\kappa_n(s)\bigr)\bigr|^{2l}\bigr)^{p/2}\\
&&\qquad\quad{}\times\bigl|X_n(s)-X_n
\bigl(\kappa_n(s)\bigr)\bigr|^p
\\
&&\qquad\quad{} +\bigl|b\bigl(s, X_n\bigl(\kappa_n(s)\bigr)
\bigr)-b_n\bigl(s,X_n\bigl(\kappa_n(s)
\bigr)\bigr)\bigr|^p
\\
& &\qquad\quad{}+ \bigl|\sigma\bigl(s,X\bigl(\kappa_n(s)\bigr)\bigr)-
\sigma_n\bigl(s,X_n\bigl(\kappa_n(s)
\bigr)\bigr)\bigr|^p \bigr\} \,ds \biggr]
\end{eqnarray*}
due to the application of Young's inequality. Note that
\[
\mathbb{E}\int_{0}^T|\chi_n(s)|^{p-2}
\chi_n(s)\alpha_n(s) \,dW(s)=0
\]
since
%
\begin{eqnarray}
\label{quadratic-variation}
&&\mathbb{E}\int_0^T\bigl|
\chi_n(s)\bigr|^{p-2}\bigl|\alpha^T_n(s)
\chi_n(s)\bigr| \,ds\nonumber\\
&&\qquad \le \mathbb{E}\int_0^T\bigl|
\chi_n(s)\bigr|^{p-1} \bigl(\bigl|\sigma\bigl(s,X(s)\bigr)\bigr| + \bigl|\sigma
_n\bigl(s,X_n\bigl(\kappa_n(s)\bigr)
\bigr)\bigr| \bigr) \,ds
\nonumber
\\
&&\qquad\le C \int_0^T\mathbb{E} \bigl(\bigl|
\chi_n(s)\bigr|^{p}+ \bigl|\sigma\bigl(s,X(s)\bigr)\bigr|^{p}+
\bigl|\sigma_n\bigl(s,X_n\bigl(\kappa_n(s)
\bigr)\bigr)\bigr|^p \bigr)\,ds
\nonumber
\\
&&\qquad\le C\mathbb{E}\int_0^T \bigl
\{\bigl|X(s)\bigr|^{p}+ \bigl|X_n(s)\bigr|^{p} + \bigl(1 + \bigl|X(s)\bigr|^{(l+2)}\bigr )^{p/2}
\\
&&\qquad\quad{} + \bigl(1 + \bigl|X_n\bigl(\kappa_n(s)
\bigr)\bigr|^{(l+2)} \bigr)^{p/2} \bigr\}\,ds
\nonumber
\\
&&\qquad\le C\nonumber
\end{eqnarray}
due to {B-2}, H\"{o}lder's inequality, \eqref
{diffusion_poly_growth}, Lemma~\ref{optimal_moment_bound} and that
$(l/2+1)p < p_0$ due to the $\mathfrak{p}$-\textit{condition}. Moreover,
\begin{eqnarray*}
\mathcal{E}(t)&: = & \mathbb{E}\int_0^{t} C
\bigl(1+ \bigl|X_n(s)\bigr|^{lp} + \bigl|X_n\bigl(
\kappa_n(s)\bigr)\bigr|^{lp}\bigr)\bigl|X_n(s)-X_n
\bigl(\kappa_n(s)\bigr)\bigr|^p \,ds
\\
&\le& C\int_0^{t} \bigl(\mathbb{E} \bigl[
\bigl(1+ \bigl|X_n(s)\bigr|^{lp} + \bigl|X_n\bigl(\kappa
_n(s)\bigr)\bigr|^{lp}\bigr)^{{(4l+2)}/{(3l)}} \bigr]
\bigr)^{{(3l)}/{(4l+2)}} \\
&&{}\times \bigl(\mathbb {E} \bigl[\bigl|X_n(s)-X_n
\bigl(\kappa_n(s)\bigr)\bigr|^{p{((4l+2)}/{(l+2)})} \bigr] \bigr)^{
{(l+2)}/{(4l+2)}}
\,ds
\\
&\le& C n^{-p/2}
\nonumber
\end{eqnarray*}
due to H\"{o}lder's inequality, Lemma~\ref{optimal_moment_bound} and
the fact that $p\frac{4l+2}{l+2} \le\frac{2p_0}{l +2}$ and $lp\frac
{4l+2}{3l} <\frac{4l+2}{6l+3}p_0 \le p_0$ (since it is assumed that $p
< \frac{p_0}{2l +1}$, see $\mathfrak{p}$-\textit{condition}). In view
of estimate \eqref{one-step-rate},
one deduces that
%
\begin{equation}
\label{rate-E} \sup_{0\le t\le T}\mathcal{E}(t) \le C
n^{-p/2}.
\end{equation}
The application of Grownwall's lemma results
in
\[
\sup_{0\le t \le T}\mathbb{E}\bigl[\bigl|\chi_n(t)\bigr|^p
\bigr]\le C n^{-p/2}
\]
due to estimate \eqref{rate-E} and Lemma~\ref{b_n_to_b}.
\end{pf*}

\subsection{Uniform $\mathcal{L}^p$ and a.s. convergence}

\begin{lemma} \label{Gyongy-Krylov lemma} Let $T\in[0, \infty)$ and
let $f:=\{f_t\}_{t\in[0,T]}$ and
$g:=\{g_t\}_{t\in[0,T]}$ be nonnegative continuous
$\mathbb{F}$-adapted processes such that, for any constant $c>0$,
\[
\mathbb{E}[f_{\tau}\one_{\{g_0\le c\}}] \le \mathbb{E}[g_{\tau}
\one_{\{g_0\le c\}}]
\]
for any stopping time $\tau\le T$. Then, for any stopping time
$\tau\le T$ and $\gamma\in(0,1)$,
\[
\mathbb{E}\Bigl[\sup_{t\le\tau}f_t^{\gamma}
\Bigr] \le \frac{2-\gamma}{1-\gamma} \mathbb{E}\Bigl[\sup_{t\le\tau}g_t^{\gamma}
\Bigr].
\]
\end{lemma}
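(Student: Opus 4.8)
The plan is to recognise this as a Lenglart-type domination inequality and to prove it in two stages: first convert the pointwise-in-$\tau$ hypothesis into a weak-type maximal estimate, then integrate that estimate against $\gamma\lambda^{\gamma-1}\,d\lambda$ via the layer-cake formula. Throughout I would write $f^\ast_\tau:=\sup_{t\le\tau}f_t$ and $g^\ast_\tau:=\sup_{t\le\tau}g_t$, noting these are pathwise finite by continuity of $f,g$ on $[0,\tau]$ and that $\sup_{t\le\tau}f_t^\gamma=(f^\ast_\tau)^\gamma$, similarly for $g$.

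The first stage targets the estimate
\[
\mathbb{P}(f^\ast_\tau>\lambda)\ \le\ \mathbb{P}(g^\ast_\tau>\lambda)+\frac1\lambda\,\mathbb{E}\big[\lambda\wedge g^\ast_\tau\big],\qquad\lambda>0 .
\]
To obtain it, fix $\lambda>0$ and introduce the hitting times $\tau_\lambda:=\inf\{t\ge0:f_t\ge\lambda\}\wedge\tau$ and $\rho_\lambda:=\inf\{t\ge0:g_t>\lambda\}\wedge\tau$, which are stopping times bounded by $T$ since $f,g$ are continuous and adapted. Then apply the hypothesis with the constant $c=\lambda$, the stopping time $\sigma:=\tau_\lambda\wedge\rho_\lambda\ (\le\tau)$, and the $\mathcal F_0$-measurable set $A_\lambda:=\{g_0\le\lambda\}$, to get $\mathbb{E}[f_\sigma\one_{A_\lambda}]\le\mathbb{E}[g_\sigma\one_{A_\lambda}]$. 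Two observations finish the stage: (i) on $A_\lambda\cap\{f^\ast_\tau>\lambda\}\cap\{g^\ast_\tau\le\lambda\}$ one has $\rho_\lambda=\tau$, hence $\sigma=\tau_\lambda$, and continuity of $f$ forces $f_\sigma\ge\lambda$; (ii) on $A_\lambda$ one has $\sigma\le\rho_\lambda$, and since $g_0\le\lambda$ there while $\rho_\lambda$ is the first time $g$ \emph{strictly} exceeds $\lambda$, continuity of $g$ gives $g_\sigma\le\lambda$, hence $g_\sigma\le\lambda\wedge g^\ast_\tau$ on $A_\lambda$. Combining (i), (ii), nonnegativity of $f_\sigma$, and the domination yields $\lambda\,\mathbb{P}(A_\lambda\cap\{f^\ast_\tau>\lambda\}\cap\{g^\ast_\tau\le\lambda\})\le\mathbb{E}[\lambda\wedge g^\ast_\tau]$; adding $\mathbb{P}(A_\lambda\cap\{g^\ast_\tau>\lambda\})$ to cover the complementary region and using that $A_\lambda^c=\{g_0>\lambda\}$ is disjoint from $A_\lambda$ and contained in $\{g^\ast_\tau>\lambda\}$ — so that $\mathbb{P}(A_\lambda\cap\{g^\ast_\tau>\lambda\})+\mathbb{P}(A_\lambda^c)\le\mathbb{P}(g^\ast_\tau>\lambda)$ — produces the displayed weak-type estimate.

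For the second stage I would multiply the weak-type estimate by $\gamma\lambda^{\gamma-1}$ and integrate over $\lambda\in(0,\infty)$. The first term integrates to $\mathbb{E}[(g^\ast_\tau)^\gamma]$ by the layer-cake identity, and the second, by Tonelli together with the elementary identity $\int_0^\infty\lambda^{\gamma-2}(\lambda\wedge a)\,d\lambda=a^\gamma\big(\tfrac1\gamma+\tfrac1{1-\gamma}\big)$ — which converges precisely because $0<\gamma<1$ — integrates to $\tfrac{1}{1-\gamma}\mathbb{E}[(g^\ast_\tau)^\gamma]$. Summing gives $\mathbb{E}[(f^\ast_\tau)^\gamma]\le\tfrac{2-\gamma}{1-\gamma}\mathbb{E}[(g^\ast_\tau)^\gamma]$, which is the assertion. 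The main obstacle is the bookkeeping in the first stage: one must tie the auxiliary level in $\rho_\lambda$ and the localising constant $c$ both to the same $\lambda$, and exploit the strict inequality in $\rho_\lambda$ against $g_0\le\lambda$ on $A_\lambda$, so that the indicator $\one_{A_\lambda}$ is absorbed cleanly at the very end rather than leaving a spurious $c$-dependent remainder term; everything else is routine.
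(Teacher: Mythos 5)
Your proof is correct. The paper itself offers no argument for this lemma---its ``proof'' is a one-line citation to Krylov's book and to Gy\"ongy--Krylov (2003)---so there is nothing internal to compare against; what you have written is, in substance, the standard proof of the Lenglart/Gy\"ongy--Krylov domination inequality that those references contain, and it is worth having spelled out. The two-stage structure is exactly right: applying the hypothesis with $c=\lambda$ to the stopping time $\sigma=\tau_\lambda\wedge\rho_\lambda$ yields the weak-type bound $\mathbb{P}(f^\ast_\tau>\lambda)\le\mathbb{P}(g^\ast_\tau>\lambda)+\lambda^{-1}\mathbb{E}[\lambda\wedge g^\ast_\tau]$, and the three bookkeeping points that make it work are all handled correctly: $f_\sigma\ge\lambda$ on $A_\lambda\cap\{f^\ast_\tau>\lambda\}\cap\{g^\ast_\tau\le\lambda\}$ because $\{t:f_t\ge\lambda\}$ is closed and $\rho_\lambda=\tau$ there; $g_\sigma\le\lambda\wedge g^\ast_\tau$ on $A_\lambda$ because $\rho_\lambda$ is a \emph{strict} exceedance time and $g_0\le\lambda$ on $A_\lambda$; and $A_\lambda^c=\{g_0>\lambda\}\subseteq\{g^\ast_\tau>\lambda\}$, which absorbs the localising indicator without a residual $c$-dependent term. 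The layer-cake integration then gives $1+\gamma\bigl(\tfrac1\gamma+\tfrac1{1-\gamma}\bigr)=\tfrac{2-\gamma}{1-\gamma}$, as required, with convergence of $\int_a^\infty\lambda^{\gamma-2}\,d\lambda$ guaranteed by $\gamma<1$. The only point left implicit is that $\tau_\lambda$ and $\rho_\lambda$ are genuine stopping times for continuous adapted processes; this relies on the usual conditions on the filtration assumed at the start of Section 2 (right-continuity for the hitting time of the open set $(\lambda,\infty)$) and deserves a sentence, but it is not a gap.
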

\begin{pf}
See \cite{Krylovbook2} and also Gy\"{o}ngy and Krylov
\cite{KrylovIstvan2003}.
\end{pf}

\begin{pf*}{Proof of Theorem~\ref{uniform-order-thmm}}
First, fix $p$ to satisfy the $\mathfrak{p}$-\textit{condition} and
define, for every $n\ge1$, $\chi_n$, $\beta_n$ and $\alpha_n$ as in
\eqref{def_chi_beta} and \eqref{def_alpha}. Moreover, consider the
function $\phi: [0, T] \to\mathbb{R}$ which is defined by
\[
\phi(t):= \exp\bigl(-(L+2)t\bigr),
\]
where $L$ is the constant in the monotonicity condition in
{A-6}. Then It\^{o}'s formula yields
\begin{eqnarray*}
&&d\bigl(\phi(t)\bigl|\chi_n(t)\bigr|^2\bigr)^{p/2}\\
&&\qquad \le
\frac{p}{2} \phi(t)^{p/2}\bigl|\chi _n(t)\bigr|^{p-2}
\bigl(2\chi_n(t)\,d\chi_n(t) + (p-1)\bigl|
\alpha_n(t)\bigr|^2\,dt \bigr)\\
&&\qquad\quad{} - \frac{p}{2}(L+2)
\phi(t)^{p/2} \bigl|\chi_n(t)\bigr|^p\,dt
\\
&&\qquad\le \frac{p}{2} \phi(t)^{p/2}\bigl|\chi_n(t)\bigr|^{p-2}
\bigl(2\chi_n(s)\beta _n(s) + (p-1)\bigl|
\alpha_n(t)\bigr|^2 \bigr)\,dt \\
&&\qquad\quad{}- \frac{p}{2}(L+2)
\phi(t)^{p/2} \bigl|\chi_n(t)\bigr|^p \,dt
\\
&&\qquad\quad{} + p\phi(t)^{p/2}\bigl|\chi_n(t)\bigr|^{p-2}\chi
_n(s)\alpha_n(t)\,dW(t).
\end{eqnarray*}
Thus, due to \eqref{sum-difference}, one obtains that
%
\begin{eqnarray}
\label{integr_factor_Ito_formula} d\bigl(\phi(t)\bigl|\chi_n(t)\bigr|^2
\bigr)^{p/2} &\le& \frac{p}{2} \phi(t)^{p/2}\bigl |\chi
_n(t)\bigr|^{p-2} \bigl((L+2)\bigl|\chi_n(t)\bigr|^2
+ \eta_n(t) \bigr)\,dt\nonumber\\
&&{} - \frac
{p}{2}(L+2)\phi(t)^{p/2}
\bigl|\chi_n(t)\bigr|^p \,dt
\\
&&{} + p\phi(t)^{p/2}\bigl|\chi_n(t)\bigr|^{p-2}
\chi_n(s)\alpha_n(t)\,dW(t),\nonumber
\end{eqnarray}
where
%
\begin{eqnarray}
\eta_n(t)&:= & C\bigl[\bigl(1+ \bigl|X_n(s)\bigr|^{2l}
+ \bigl|X_n\bigl(\kappa_n(s)\bigr)\bigr|^{2l}\bigr)
\bigl|X_n(s)-X_n\bigl(\kappa_n(s)
\bigr)\bigr|^2
\nonumber
\\
&&{} +\bigl|b\bigl(s, X_n\bigl(\kappa_n(s)\bigr)
\bigr)-b_n\bigl(s,X_n\bigl(\kappa_n(s)
\bigr)\bigr)\bigr|^2
\\
&&{} + \bigl|\sigma\bigl(s,X\bigl(\kappa_n(s)\bigr)\bigr)-
\sigma_n\bigl(s,X_n\bigl(\kappa_n(s)
\bigr)\bigr)\bigr|^2\bigr].\nonumber
\end{eqnarray}
and $C$ is here and below a generic positive constant independent of
$n$. Consequently, one obtains for every stopping time $\tau\le T$,
due to \eqref{quadratic-variation},
\[
\mathbb{E}\bigl[\bigl(\phi(\tau)\bigl|\chi_n(\tau)\bigr|^2
\bigr)^{p/2}\bigr] \le\frac{p}{2}\mathbb {E} \biggl[\int
_0^{\tau} \bigl(\phi(t) \bigl|\chi_n(t)\bigr|^2
\bigr)^{{(p-2)}/{2}}\eta_n(t)\,dt \biggr],
\]
which results in, due to Lemma~\ref{Gyongy-Krylov lemma},
\[
\mathbb{E}\Bigl[\sup_{t\le T}\bigl(\phi(t)\bigl|\chi_n(t)\bigr|^2
\bigr)^{{p\gamma}/{2}}\Bigr] \le C \mathbb{E} \biggl[ \biggl(\int
_0^T \bigl(\phi(t)\bigl |\chi_n(t)\bigr|^2
\bigr)^{{(p-2)}/{2}}\eta_n(t)\,dt \biggr)^{\gamma} \biggr]
\]
for any $\gamma\in(0, 1)$. Then, for $p>2$, the application of
Young's inequality yields
\begin{eqnarray*}
&&\mathbb{E}\Bigl[\sup_{t\le T}\bigl(\phi(t)\bigl|\chi_n(t)\bigr|^2
\bigr)^{{p\gamma}/{2}}\Bigr]
\\
&&\qquad\le \frac{1}{2}\mathbb{E}\Bigl[\sup
_{t\le T}\bigl(\phi(t)\bigl|\chi_n(t)\bigr|^2
\bigr)^{{p\gamma
}/{2}}\Bigr] + C \mathbb{E} \biggl[ \biggl(\int
_0^T \eta_n(t)\,dt
\biggr)^{{p\gamma
}/{2}} \biggr],
\end{eqnarray*}
which implies that
\begin{eqnarray*}
\mathbb{E}\Bigl[\sup_{t\le T}\bigl(\phi(t)\bigl|\chi_n(t)\bigr|^2
\bigr)^{{p\gamma}/{2}}\Bigr] &\le& C \mathbb{E} \biggl[ \biggl(\int
_0^T \eta_n(t)^{{p}/{2}}\,dt
\biggr)^{\gamma} \biggr] \\
&\le& C \biggl( \mathbb{E} \biggl[\int
_0^T \eta_n(t)^{{p}/{2}}\,dt
\biggr] \biggr)^{\gamma}.
\end{eqnarray*}
The above estimate is also true if $p=2$, since it is an immediate
consequence of \eqref{integr_factor_Ito_formula}. Moreover, one calculates
\begin{eqnarray*}
\mathbb{E} \biggl[\int_0^T
\eta_n(t)^{{p}/{2}}\,dt \biggr]& \le& C \biggl\{ \mathcal{E}(t) +
\mathbb{E} \biggl[\int_0^T \bigl|b\bigl(s,
X_n\bigl(\kappa _n(s)\bigr)\bigr)-b_n
\bigl(s,X_n\bigl(\kappa_n(s)\bigr)
\bigr)\bigr|^p \,dt \biggr]
\\
&&{} + \mathbb{E} \biggl[\int_0^T \bigl|\sigma
\bigl(s,X\bigl(\kappa_n(s)\bigr)\bigr)-\sigma _n
\bigl(s,X_n\bigl(\kappa_n(s)\bigr)
\bigr)\bigr|^p \,dt \biggr] \biggr\}
\\
&\le& C  n^{-\alpha p}
\end{eqnarray*}
due to \eqref{rate-E}, \eqref{difference_b} and \eqref
{difference_sigma}. Thus,
\[
\mathbb{E}\Bigl[\sup_{t\le T}\bigl(\phi(t)\bigl|\chi_n(t)\bigr|^2
\bigr)^{{p\gamma}/{2}}\Bigr] \le C n^{-\alpha p \gamma},
\]
which yields the desired result
\[
\mathbb{E}\Bigl[\sup_{t\le T}\bigl|\chi_n(t)\bigr|^{p\gamma}
\Bigr] \le\exp\bigl((L+2)T\bigr) \mathbb {E}\Bigl[\sup_{t\le T}
\bigl(\phi(t)\bigl|\chi_n(t)\bigr|^2\bigr)^{{p\gamma}/{2}}\Bigr]
\le C n^{-\alpha p \gamma}.
\]
\upqed\end{pf*}

\begin{cor}
Suppose \textup{{A-2}} and \textup{{A-4}--{A-6}} hold and $p_0$ is
sufficiently large. Then the
numerical scheme \eqref{tem} with coefficients which are given by \eqref
{b2_n} and \eqref{sigma2_n} with $\alpha=1/2$ converges to the true
solution of SDE \eqref{sde} almost surely with order $\kappa<1/2$,
that is, there exists a finite random variable $\zeta_{\kappa}$ such
that almost surely
%
\begin{eqnarray}
\label{a.s_rate_one_half} \sup_{0 \le t \le T} \bigl|X(t)-X_n(t)\bigr| \le
\zeta_{\kappa} n^{-\kappa}
\end{eqnarray}
for any $\kappa\in(0,  \frac{1}{2}-\frac{2l+1}{p_0})$ and $l <\frac
{p_0-2}{4}$.
\end{cor}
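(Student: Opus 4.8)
The plan is to deduce the almost sure rate from the uniform $\mathcal{L}^q$ estimate already established in Theorem \ref{uniform-order-thm}, via a Borel--Cantelli argument. First I would fix $p$ satisfying the $\mathfrak{p}$ - \textbf{condition}; since $\alpha=1/2$, $l < \frac{p_0-2}{4}$, and $p_0$ is assumed sufficiently large, $p$ can be taken as close as desired to $\frac{p_0}{2l+1}$ (reducing $p$ slightly, if necessary, to keep $p < p_1$ as well). Writing $Z_n := \sup_{0 \le t \le T}|X(t)-X_n(t)|$, Theorem \ref{uniform-order-thm} then gives, for every $q < p$, a constant $C$ independent of $n$ with $\mathbb{E}[Z_n^q] \le C n^{-q/2}$.

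Next I would apply Markov's inequality: for the prescribed exponent $\kappa$,
\begin{equation*}
\mathbb{P}\big(Z_n > n^{-\kappa}\big) \le n^{q\kappa}\,\mathbb{E}[Z_n^q] \le C\, n^{-q(\frac{1}{2} - \kappa)}.
\end{equation*}
Because $\kappa < \frac{1}{2} - \frac{2l+1}{p_0}$, one has $\frac{1}{2} - \kappa > \frac{2l+1}{p_0} = \big(\frac{p_0}{2l+1}\big)^{-1}$, so by choosing $q$ (hence $p$) close enough to $\frac{p_0}{2l+1}$ one secures $q(\frac{1}{2} - \kappa) > 1$, whence $\sum_{n \ge 1}\mathbb{P}(Z_n > n^{-\kappa}) < \infty$. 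The Borel--Cantelli lemma then yields that, almost surely, $Z_n \le n^{-\kappa}$ for all but finitely many $n$.

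Finally I would set $\zeta_\kappa := \sup_{n \ge 1} n^{\kappa} Z_n$. By the previous step all but finitely many terms of this supremum are bounded by $1$ almost surely, and each of the remaining (finitely many) terms is finite almost surely since both $X$ and the It\^o process $X_n$ have continuous sample paths on $[0,T]$ (the latter existing and being unique by Remark \ref{moment_N_bound}). Hence $\zeta_\kappa$ is an almost surely finite random variable satisfying $\sup_{0 \le t \le T}|X(t) - X_n(t)| \le \zeta_\kappa n^{-\kappa}$ for every $n$, which is \eqref{a.s_rate_one_half}. The argument is essentially routine; the only delicate point is the bookkeeping in the first step — checking that the hypotheses (in particular ``$p_0$ sufficiently large'', $l < \frac{p_0-2}{4}$, and the requirement $p < p_1$) genuinely allow $q$ to be pushed close enough to $\frac{p_0}{2l+1}$ for $q(\frac{1}{2}-\kappa) > 1$ to hold simultaneously with all constraints of the $\mathfrak{p}$ - \textbf{condition}.
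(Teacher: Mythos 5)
Your argument is correct and is essentially identical to the paper's own proof: both deduce the result from the uniform $\mathcal L^q$ bound of Theorem \ref{uniform-order-thm} via Markov's inequality and the Borel--Cantelli lemma, with the exponent chosen in the (nonempty, by the hypothesis on $\kappa$) window between $\tfrac{2}{1-2\kappa}$ and $\tfrac{p_0}{2l+1}$ so that the series of tail probabilities converges. Your explicit construction of $\zeta_\kappa$ and your remark that the constraint $p<p_1$ must also be accommodated (which is where ``$p_0$ sufficiently large'' is implicitly used) are slightly more careful than the paper's one-line conclusion, but the route is the same.
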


\begin{pf}
Consider a $p \in(\frac{2}{1-2\kappa},  \frac{p_0}{2l+1})$. Then
Theorem~\ref{uniform-order-thmm} yields
\[
\mathbb{E}\Bigl[\sup_{t\le T}\bigl|X(t)-X_n(t)\bigr|^p
\Bigr] \le C n^{-p/2}.
\]
Consequently,
\begin{eqnarray*}
\sum_{n\ge1}\mathbb{P}\Bigl(\sup_{t\le T}\bigl|X(t)-X_n(t)\bigr|>n^{-\kappa}
\Bigr)&\le&\sum_{n\ge1}\mathbb{E}\Bigl[\sup
_{t\le T}\bigl|X(t)-X_n(t)\bigr|^p
\Bigr]n^{\kappa p}\\
&\le&\sum_{n\ge1} C
n^{-(1/2-\kappa)p} <\infty
\end{eqnarray*}
and thus, the Borel--Cantelli lemma implies that there exits a finite
random variable $\zeta_{\kappa}$ such that almost surely
\[
\sup_{t\le T}\bigl|X(t)-X_n(t)\bigr|\le\zeta_{\kappa}
n^{-\kappa}.
\]
\upqed\end{pf}

%
\section{Simulation results}

\begin{table}
\tablewidth=200pt
\caption{Errors in the tamed Euler scheme}\label{ex1:table1}
\begin{tabular*}{200pt}{@{\extracolsep{\fill}}lc@{}}
\hline
\textbf{Step-size}& $\bolds{\sqrt{E|X(t)-X_n(t)|^2}}$ \\
\hline
$2^{-19}$& 0.0007546660690748 \\
$2^{-18}$& 0.0014293698755019 \\
$2^{-17}$& 0.0024054188924763 \\
$2^{-16}$& 0.0036583313232057 \\
$2^{-15}$& 0.0053921530728755 \\
$2^{-14}$& 0.0080671890795787 \\
$2^{-13}$& 0.0118014601267312 \\
$2^{-12}$& 0.0165751338687870 \\
$2^{-11}$& 0.0236798743828524 \\
$2^{-10}$& 0.0322254347247282 \\
$2^{-09}$& 0.0445565040073459 \\
$2^{-08}$& 0.0614016271396012 \\
$2^{-07}$& 0.0826347082207412 \\
$2^{-06}$& 0.1085948479470830 \\
\hline
\end{tabular*}
\end{table}

In order to further support the theoretical results obtained in this
article, simulation results are presented for the following nonlinear
(2-$d$) stochastic differential equation (see also Section~\ref{Intro}
for comparison with \cite{HJ} and \cite{Tretyakov-Zhang}),
\[
dX(t)=\lambda X(t) \bigl(\mu-\bigl|X(t)\bigr|\bigr)\,dt +
 \xi\bigl|X(t)\bigr|^{3/2}\,dW_t,
\]
where the initial data $X_0= [1,   1]^T$, $\lambda=2.5$, $\mu= 1$,
$\xi$ is the following positive definite matrix:
\[
\pmatrix{ \frac{2}{\sqrt{10}} &
\frac{1}{\sqrt{10}}
\vspace*{2pt}\cr
\frac{1}{\sqrt{10}} & \frac{2}{\sqrt{10}} }
\]
with $|\xi|=1$, and $T=1$. The outputs in Table~\ref{ex1:table1} and
Figure~\ref{ex1:figure1}\footnote
{Table~\ref{ex1:table1} and Figure~\ref{ex1:figure1} are courtesy of Chaman Kumar.} are based on 1000
simulations, that is, simulated paths, of scheme \eqref{tem} with
coefficients given by Model 2, that is, \eqref{b2_n} and \eqref
{sigma2_n} with $l=1$, and presented by using the $\log_2$ scale.

\begin{figure}

\includegraphics{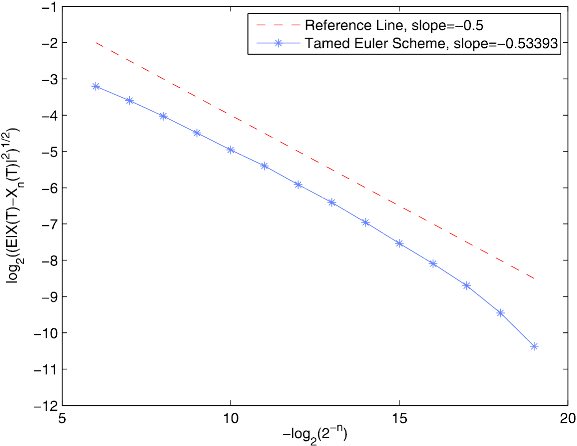}

\caption{Rate of convergence of the explicit Euler scheme (Model 2).}
 \label{ex1:figure1}
\end{figure}


\begin{appendix}\label{app}
\section*{Appendix} 
Consider the following $d$-dimensional SDE which is given by
\[
dX_t=\lambda X_t\bigl(\mu-|X_t|\bigr)\,dt +
\xi|X_t|^{3/2}\,dW_t\qquad \forall t\in[0, T],
\]
with initial condition $X_0\in\mathbb R^d$, where $\lambda$, $\mu$ and
all elements of the vector $X_0$ are positive constants. Moreover, $\xi
\in \mathbb R^{d\times d_1}$ is a positive definite matrix and $\{
W(t)\}_{t \geq0}$ is a $d_1$-dimensional Wiener martingale. One then
defines $b(x): = \lambda x (\mu-|x|)$ and $\sigma(x):= \xi|x|^{3/2}$
for every $x\in\mathbb R^d$ and observes that the coercivity condition
{A-4}
\[
2xb(x) + (p_0-1)\bigl|\sigma(x)\bigr|^2 \leq K
\bigl(1+|x|^2\bigr),
\]
is satisfied with $p_0 \le\frac{2\lambda+|\xi|^2}{|\xi|^2}$ and
$K=2\lambda\mu$ for all $x, y \in\mathbb{R}^d$. Moreover, one obtains that
\[
(x-y)\bigl[b(x)-b(y)\bigr]  \le \lambda\mu|x-y|^2-\lambda\bigl(|x|+|y|\bigr)
\bigl(|x|-|y|\bigr)^2
\]
and
\[
\bigl|\sigma(x)-\sigma(y)\bigr|^2  \le2 |\xi|^2\bigl(|x|+|y|\bigr)
\bigl(|x|-|y|\bigr)^2.
\]
As a result, the monotonicity condition in {A-6}
\[
2(x-y) \bigl(b(t,x)-b(t,y)\bigr) + (p_1-1)\bigl|\sigma(t,x)-
\sigma(t,y)\bigr|^2 \leq L |x-y|^2
\]
is satisfied with $p_1 \le\frac{\lambda+|\xi|^2}{|\xi|^2}$ and
$L=2\lambda\mu$ for all $x, y \in\mathbb{R}^d$. Finally, one easily
obtains that
\[
\bigl|b(x)-b(y)\bigr| \le\lambda\max(\mu,1) \bigl(1+|x|+|y|\bigr)|x-y|
\qquad\mbox{for all } x, y \in
\mathbb{R}^d,
\]
to conclude that $l=1$ in {A-6}.
\end{appendix}




\printaddresses

\begin{thebibliography}{12}


\bibitem{Goard-Mazur}
\begin{barticle}[mr]
\bauthor{\bsnm{Goard},~\bfnm{Joanna}\binits{J.}} \AND
\bauthor{\bsnm{Mazur},~\bfnm{Mathew}\binits{M.}}
(\byear{2013}).
\btitle{Stochastic volatility models and the pricing of VIX options}.
\bjournal{Math. Finance}
\bvolume{23}
\bpages{439--458}.
\bid{doi={10.1111/j.1467-9965.2011.00506.x}, issn={0960-1627}, mr={3070371}}
\end{barticle}
%

\bptok{imsref}%
\endbibitem

\bibitem{KrylovIstvan2003}
\begin{bincollection}[mr]
\bauthor{\bsnm{Gy{\"o}ngy},~\bfnm{Istv{\'a}n}\binits{I.}} \AND
\bauthor{\bsnm{Krylov},~\bfnm{Nicolai}\binits{N.}}
(\byear{2003}).
\btitle{On the rate of convergence of splitting-up approximations for {SPDE}s}.
In \bbooktitle{Stochastic Inequalities and Applications}.
\bseries{Progr. Probab.}
\bvolume{56}
\bpages{301--321}.
\bpublisher{Birkh\"auser},
\blocation{Basel}.
\bid{mr={2073438}}
\end{bincollection}
%

\bptok{imsref}%
\endbibitem

\bibitem{Gyongy-Sabanis}
\begin{barticle}[mr]
\bauthor{\bsnm{Gy{\"o}ngy},~\bfnm{Istvan}\binits{I.}} \AND
\bauthor{\bsnm{Sabanis},~\bfnm{Sotirios}\binits{S.}}
(\byear{2013}).
\btitle{A note on {E}uler approximations for stochastic differential equations with delay}.
\bjournal{Appl. Math. Optim.}
\bvolume{68}
\bpages{391--412}.
\bid{doi={10.1007/s00245-013-9211-7}, issn={0095-4616}, mr={3131501}}
\end{barticle}
%

\bptok{imsref}%
\endbibitem

\bibitem{HMS}
\begin{barticle}[mr]
\bauthor{\bsnm{Higham},~\bfnm{Desmond~J.}\binits{D.~J.}},
\bauthor{\bsnm{Mao},~\bfnm{Xuerong}\binits{X.}} \AND
\bauthor{\bsnm{Stuart},~\bfnm{Andrew~M.}\binits{A.~M.}}
(\byear{2002}).
\btitle{Strong convergence of {E}uler-type methods for nonlinear stochastic differential equations}.
\bjournal{SIAM J. Numer. Anal.}
\bvolume{40}
\bpages{1041--1063 (electronic)}.
\bid{doi={10.1137/S0036142901389530}, issn={0036-1429}, mr={1949404}}
\end{barticle}
%

\bptok{imsref}%
\endbibitem

\bibitem{HJ}
\begin{bmisc}[auto:parserefs-M02]
\bauthor{\bsnm{Hutzenthaler},~\bfnm{Martin}\binits{M.}} \AND
\bauthor{\bsnm{Jentzen},~\bfnm{Arnulf}\binits{A.}}
(\byear{2015}).
\bhowpublished{Numerical approximations of stochastic differential equations with nonglobally Lipschitz continuous coefficients.
\textit{Mem. Amer. Math. Soc.}
\textbf{236} 1112.}
\end{bmisc}
%

\bptok{imsref}%
\endbibitem

\bibitem{HJK-converge}
\begin{barticle}[mr]
\bauthor{\bsnm{Hutzenthaler},~\bfnm{Martin}\binits{M.}},
\bauthor{\bsnm{Jentzen},~\bfnm{Arnulf}\binits{A.}} \AND
\bauthor{\bsnm{Kloeden},~\bfnm{Peter~E.}\binits{P.~E.}}
(\byear{2012}).
\btitle{Strong convergence of an explicit numerical
method for {SDE}s with non-globally {L}ipschitz continuous coefficients}.
\bjournal{Ann. Appl. Probab.}
\bvolume{22}
\bpages{1611--1641}.
\bid{doi={10.1214/11-AAP803}, issn={1050-5164}, mr={2985171}}
\end{barticle}
%

\bptok{imsref}%
\endbibitem
%
%

\bibitem{Krylovbook2}
\begin{bbook}[mr]
\bauthor{\bsnm{Krylov},~\bfnm{N.~V.}\binits{N.~V.}}
(\byear{1995}).
\btitle{Introduction to the Theory of Diffusion Processes}.
\bseries{Translations of Mathematical Monographs}
\bvolume{142}.
\bpublisher{Amer. Math. Soc.},
\blocation{Providence, RI}.
\bid{mr={1311478}}
\end{bbook}
%

\bptok{imsref}%
\endbibitem

\bibitem{Krylov-book}
\begin{bbook}[mr]
\bauthor{\bsnm{Krylov},~\bfnm{N.~V.}\binits{N.~V.}}
(\byear{2009}).
\btitle{Controlled Diffusion Processes}.
\bseries{Stochastic Modelling and Applied Probability}
\bvolume{14}.
\bpublisher{Springer},
\blocation{Berlin}.
\bid{mr={2723141}}
\end{bbook}
%

\bptok{imsref}%
\endbibitem

\bibitem{Mao-Szpruch}
\begin{barticle}[mr]
\bauthor{\bsnm{Mao},~\bfnm{Xuerong}\binits{X.}} \AND
\bauthor{\bsnm{Szpruch},~\bfnm{Lukasz}\binits{L.}}
(\byear{2013}).
\btitle{Strong convergence rates for backward {E}uler--{M}aruyama method for non-linear dissipative-type stochastic differential equations with super-linear diffusion coefficients}.
\bjournal{Stochastics}
\bvolume{85}
\bpages{144--171}.
\bid{doi={10.1080/17442508.2011.651213}, issn={1744-2508}, mr={3011916}}
\end{barticle}
%

\bptok{imsref}%
\endbibitem

\bibitem{Sabanis}
\begin{barticle}[mr]
\bauthor{\bsnm{Sabanis},~\bfnm{Sotirios}\binits{S.}}
(\byear{2013}).
\btitle{A note on tamed {E}uler approximations}.
\bjournal{Electron. Commun. Probab.}
\bvolume{18}
\bpages{10}.
\bid{doi={10.1214/ECP.v18-2824}, issn={1083-589X}, mr={3070913}}
\end{barticle}
%

\bptok{imsref}%
\endbibitem

\bibitem{Tretyakov-Zhang}
\begin{barticle}[mr]
\bauthor{\bsnm{Tretyakov},~\bfnm{M.~V.}\binits{M.~V.}} \AND
\bauthor{\bsnm{Zhang},~\bfnm{Z.}\binits{Z.}}
(\byear{2013}).
\btitle{A fundamental mean-square convergence theorem for {SDE}s with locally {L}ipschitz coefficients and its applications}.
\bjournal{SIAM J. Numer. Anal.}
\bvolume{51}
\bpages{3135--3162}.
\bid{doi={10.1137/120902318}, issn={0036-1429}, mr={3129758}}
\end{barticle}
%

\bptok{imsref}%
\endbibitem
\end{thebibliography}
\end{document}